\newtheorem{assumption}{Assumption}[section]
\newtheorem{thm}{Theorem}[section]
\newtheorem{lem}[thm]{Lemma}
\newtheorem{remark}{Remark}[section]
\newtheorem{proposition}{Proposition}[section]
\newcommand{\abs}[1]{\lv #1\rv}
\newcommand{\di}{\,\mrm{d}}
\newcommand{\ds}{\,{\mrm{d}}s}
\newcommand{\dt}{\,{\mrm{d}}t}
\newcommand{\de}{\delta}
\newcommand{\dn}{\delta_n}
\newcommand{\dsp}{\displaystyle}
\newcommand{\eqv}{\Longleftrightarrow}
\newcommand{\esp}{\mathds{E}}
\newcommand{\gm}{\gamma}
\newcommand{\indi}{\mathds{1}}
\newcommand{\p}{\mathds{P}}
\newcommand{\q}{\mathds{Q}}
\newcommand{\lv}{\left\vert}
\newcommand{\mrm}{\mathrm}
\newcommand{\n}{{\mathds{N}}}
\newcommand{\norm}[1]{\left\Vert#1\right\Vert}
\newcommand{\reel}{{\mathds{R}}}
\newcommand{\rv}{\right\vert}
\newcommand{\tht}{\theta}
\newcommand{\tv}{\xrightarrow}
\newcommand{\var}{{\mrm{Var\,}}}
\newcommand{\ve}{\varepsilon}
\newcommand{\vp}{\varphi}
\newcommand{\z}{{\mathds{Z}}}
\newcommand{\cal}{\mathcal}
\begin{document}

\title{Bayesian prediction for stochastic processes. Theory and applications}

\author[D. Blanke]{Delphine Blanke}

\address{Avignon University,  Laboratoire de Math\'{e}matiques d'Avignon,  33 rue Louis Pasteur, 84000 Avignon, France} 
\email{delphine.blanke@univ-avignon.fr}

\author[D. Bosq]{Denis Bosq}

\address{Sorbonne Universit\'{e}s, UPMC Univ Paris 06, EA 3124, Laboratoire de Statistique Th\'{e}orique et Appliqu\'{e}e, F-75005, 4 place Jussieu, Paris, France} \email{denis.bosq@upmc.fr}

\begin{abstract} In this paper, we adopt a Bayesian point of view for predicting real
continuous-time processes. We give two equivalent definitions of a Bayesian predictor and study some
properties: admissibility, prediction sufficiency, non-unbiasedness, comparison with
efficient predictors.  Prediction of Poisson process and prediction of Ornstein-Uhlenbeck process
in the continuous and sampled situations are considered. Various simulations illustrate comparison with non-Bayesian predictors.
\end{abstract}

\subjclass[2000]{Primary 62M20, 62F15}
\keywords{Bayesian prediction, MAP, Comparing predictors, Poisson process, Ornstein-Uhlenbeck process}

\maketitle

\hyphenation{si-tuation}

\section{Introduction}\label{s1}

A lot of papers are devoted to Bayesian estimation for stochastic processes (see for example \citet{Ku04} for the asymptotic point of view) while Bayesian prediction does not appear very much in statistical literature.  Some authors have studied the case of linear processes  \citep[see][]{Di90,SM00,SM03} but continuous time is not often considered. However, this topic is important, in particular if the number of data is small. In this paper, we study some properties of Bayesian predictors and give examples of applications to prediction of continuous-time processes.  Note that we don't consider prediction for the linear model, a somewhat different topic which has been extensively studied in literature. In fact, our main goal is to compare efficiency of Bayesian predictors with non-Bayesian ones, especially if we have few data at our disposal. Various simulations illustrate the obtained results.

Section~\ref{s2} presents the general prediction model~;~in this context estimation appears as a special case of prediction. The main point of the theory is the fact that, given the data $X$, a statistical predictor of $Y$ is an approximation of the conditional expectation $\esp_{\tht}(Y|X)$, where $\tht$ is the unknown parameter. Section~\ref{s3} deals with Bayesian prediction:~we give two equivalent definitions of a Bayesian predictor linked with the equivalence of predicting $Y$ and $\esp_{\tht}(Y|X)$. However, in some situations, it is difficult to get an explicit form of the Bayesian predictor, thus it is more convenient to substitute the conditional expectation with the conditional mode. An alternative method consists in computing the Bayesian estimator or the maximum a posteriori (MAP)  and to plug it in $\esp_{\tht}(Y|X)$. We recall some properties of the MAP and underscore its link with the maximum likelihood estimator.

In Section~\ref{s4}, we study some properties of Bayesian predictors: admissibility, connection with sufficiency and unbiasedness, case where the conditional expectation admits a special form. Section~\ref{s5} considers the simple case of Poisson process prediction. We compare the unbiased efficient predictor with the Bayesian and the MAP ones. Concerning diffusion processes, note that \citet{TV05} obtain fine results for Bayesian prediction but without comparison with classical predictors.

For the Ornstein-Uhlenbeck process, we deal with prediction in Section~\ref{s6} for the centered and non-centered case and with various priors, while Section~\ref{s7} is devoted to the sampled case. Some asymptotic results are given along the paper, but, since the non asymptotic case is the most important in the Bayesian perspective, theoretical and numerical comparisons focus on this point.

\section{The prediction model}\label{s2}

In the non Bayesian context, let $(X,Y)$ be a random vector, defined on some Probability space and with values in a measurable space $(F \times G, {\cal F} \otimes {\cal G})$. In the following, $F$ and $G$ will be $\n$ or $\reel^k$, $k\ge 1$. $(X,Y)$ has distribution $(\p_{\tht}, \tht \in \Theta)$ where $\theta$ is the unknown parameter and $\Theta$ is an open set in $\reel$. We suppose that $\p_{\tht}$ has a density $f(x,y,\tht)$ with respect to a $\sigma$-finite measure $\lambda \otimes \mu$.

One observes $X$ and wants to predict $Y$. Actually, it is possible to consider the more general problem `\emph{predict $Z=\ell(X,Y,\tht)$ given $X$}'  \citep[cf.][]{Ya92}.

In this paper, we suppose that $Z$ is real valued and denote $p(X)$ (or $q(X)$) a statistical predictor. Then, if $Z$, $p(X)$ and $q(X)$ are square integrable,  a classical preference relation is
$$p(X)\prec q(X) \;\;(Z) \Longleftrightarrow \esp_{\tht}(p(X)-Z)^2 \le \esp_{\tht}(q(X)-Z)^2, \; \tht\in\Theta$$
where `(Z)' means `for predicting $Z$' and $\esp_{\tht}$ is the expectation taken with respect to the distribution $\p_{\tht}$.

Now, let $\esp_{\tht} (Z | X)$ be the conditional expectation of $Z$ given $X$ associated with the distribution $\p_{\tht}$. The next lemma is simple but important.
\begin{lem}\label{l1} We have $p(X) \prec q(X) \;\; (Z) \Longleftrightarrow p(X) \prec q(X) \;\;\big(\esp_{\tht}(Z|X)\big)$.
\end{lem}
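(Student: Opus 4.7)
The plan is to prove both implications simultaneously by showing that for any square-integrable predictor $r(X)$, the mean squared error splits as
$$\esp_{\tht}\bigl(r(X)-Z\bigr)^2 = \esp_{\tht}\bigl(r(X)-\esp_{\tht}(Z|X)\bigr)^2 + \esp_{\tht}\bigl(\esp_{\tht}(Z|X)-Z\bigr)^2.$$
Since the second term on the right does not depend on the choice of predictor, subtracting this identity applied to $p(X)$ from the same identity applied to $q(X)$ immediately yields the equivalence claimed in the lemma, uniformly in $\tht$.

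To establish the Pythagorean-type identity, I would write $r(X)-Z = \bigl(r(X)-\esp_{\tht}(Z|X)\bigr) + \bigl(\esp_{\tht}(Z|X)-Z\bigr)$, square, and take expectation under $\p_{\tht}$. The only non-trivial point is to show that the cross term vanishes:
$$\esp_{\tht}\Bigl[\bigl(r(X)-\esp_{\tht}(Z|X)\bigr)\bigl(\esp_{\tht}(Z|X)-Z\bigr)\Bigr] = 0.$$
This follows by conditioning on $X$: the factor $r(X)-\esp_{\tht}(Z|X)$ is $\sigma(X)$-measurable, so it can be pulled out, leaving $\esp_{\tht}\bigl[\esp_{\tht}(Z|X)-Z\mid X\bigr] = 0$ by the defining property of the conditional expectation.

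The main obstacle, if any, is to verify that all the quantities in the decomposition are well-defined and integrable, so that the conditioning argument and the removal of the cross term are rigorous. This is guaranteed by the standing assumption that $Z$, $p(X)$ and $q(X)$ are square integrable under $\p_{\tht}$: the conditional expectation $\esp_{\tht}(Z|X)$ is then also in $L^2(\p_{\tht})$, and Cauchy--Schwarz ensures the cross term is integrable, justifying the expansion. Once this is in place, the equivalence is immediate from the identity above applied to $r=p$ and $r=q$ for every $\tht \in \Theta$. \fdem
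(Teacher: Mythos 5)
Your proof is correct and follows exactly the same route as the paper: the Pythagorean decomposition of $\esp_{\tht}(r(X)-Z)^2$ via the vanishing cross term, applied to both $p$ and $q$, with the residual term $\esp_{\tht}(\esp_{\tht}(Z|X)-Z)^2$ independent of the predictor. You simply spell out the orthogonality and integrability details that the paper leaves implicit.
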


\begin{proof} The result directly follows from the Pythagoras theorem, since:
$$
\esp_{\tht} ( p(X) - Z)^2 = \esp_{\tht} \big( p(X) - \esp_{\tht}(Z|X)\big)^2 +  \esp_{\tht} (\esp_{\tht}(Z|X)-Z)^2
$$
and $$\esp_{\tht} ( q(X) - Z)^2 = \esp_{\tht} \big( q(X) - \esp_{\tht}(Z|X)\big)^2 +  \esp_{\tht} (\esp_{\tht}(Z|X)-Z)^2. $$ \end{proof}

\indent This lemma shows that predicting $Z$ or predicting $\esp_{\theta}(Z| X)$ is the same problem.

Note that prediction theory has some similarity but also some difference with estimation theory. In the sequel, we will only recall some necessary definitions and results. We refer to \citet{BB07}, chapters 1 and 2, for a more complete exposition.

\section{Bayesian prediction}\label{s3}
\subsection{The Bayesian predictor}
In the Bayesian framework, we suppose that $\mathbb{T}$ is a random variable with prior distribution $\tau$ over $\Theta$, and admitting a density $\vp(\tht)$ with respect to a $\sigma$-finite measure $\nu$ \citep[cf.][]{LC98}.

Thus, we may consider the scheme
$$(\Omega, {\cal A}, \p) \tv[]{(X,Y, \mathbb{T})} (F \times G \times \Theta, {\cal F} \otimes {\cal G} \otimes {\cal B}_{\theta}),$$
where $(\Omega, {\cal A}, \p)$ is a probability space, $(X,Y, \mathbb{T})$  a random vector and ${\cal B}_{\tht}$ the $\sigma$-algebra of Borel sets over $\Theta$.  Now, we denote $\mathds{Q}$ the distribution of $(X,Y,\mathbb{T})$ and we  consider the following regularity assumption:
\begin{assumption} \label{A1}\hfill\\
$\mathds{Q}$ admits a strictly positive density $f(x,y,\theta)\vp(\theta)$  over $F\times G\times \Theta$,  with respect to the $\sigma$-finite measure $\lambda \otimes \mu \otimes \nu$. In addition, $f$ and $\vp$ are supposed to be continuous with respect to $\tht$ on $\Theta$.
\end{assumption}
Note that in practice, $\lambda$, $\mu$ and $\nu$ can be the Lebesgue measure or the counting measure. Also, remark that similar results can be derived under a more general version of Assumption~\ref{A1}, namely the existence of a common version $m(X,\tht)$ of $\esp_{\tht}(Y|X)$ for all $\tht\in \Theta$  \citep[see][]{BB12}.

\medskip

Now, the Bayesian risk  for prediction is
$$r(p(X),Y) := \esp(p(X) - Y)^2= \int_{\Theta} \esp_{\tht} (p(X) - Y)^2 \vp(\theta) \mathrm{d}\nu(\theta),$$
where $\esp_{\tht}$ is expectation taken with respect to $\p_{\tht}$ and $\esp$ is expectation taken with respect to $\mathds{Q}$.

It follows that the Bayesian predictor is
\begin{equation}\label{e31}
p_0(X) =\arg\!\min_p r(p(X),Y) = \esp(Y|X).
\end{equation}
More precisely, we choose $p(X)$ under the form
$$p_0(X) = \int_G y f(y|X) \mathrm{d}\mu(y)$$
where
$$f(y|X) = \frac{\int_{\Theta} f(X,y,\theta) \vp(\theta) \mathrm{d}\nu(\tht)}{\int_{G \times \Theta} f(X,y,\tht) \vp(\tht) \mathrm{d}\mu(y) \mathrm{d}\nu(\tht)}$$
which ensures existence and uniqueness of $p_0(X)$ under Assumption~\ref{A1}. In the following, we set
$$m(X,\tht) = \esp_{\tht}(Y| X)= \int_G y f_{\theta}(y|x) \mathrm{d} \mu(y), \tht\in\Theta$$
where
$$f_{\tht}(y|x) = \frac{f(x,y,\tht)}{\int_F f(x,y,\tht) \mathrm{d} \lambda(x)}.$$
\begin{remark} \label{r31} If Assumption~\ref{A1} holds, the relation $\esp(Y| X) = \esp \big( \esp(Y| X,\mathbb{T}) \big| X\big)$ gives   the following alternative form of $p_0$:  $$p_0(X)= \esp \big( m(X,\mathbb{T})|X\big)$$ where $m(X,\mathbb{T}) = \esp \big( Y| X,\mathbb{T} \big)$.
\end{remark}

\subsection{The MAP predictor}

An alternative method of Bayesian prediction is based on the conditional mode: one may compute the mode of the distribution of $Y$, given $X$, with respect to $\mathds{Q}$. If a strictly positive density does exist, the distribution of $(X,Y)$ has marginal density $f(x,y) = \int_{\Theta} f(x,y,\tht) \vp(\tht) \di\tht$
and, in fact, it suffices to compute $\dsp \arg\!\max_y f(x,y)$ ($x$ fixed).

A related method consists in determining the mode of $\mathbb{T}$ given $X$ and to plug it in the conditional expectation $\esp_{\tht}(Y|X)$. This mode (also called \emph{maximum a posteriori}, MAP) has the expression
$$
\widetilde{\tht}(x)= \arg\!\max_{\tht} \frac{\ell(x,\tht)\vp(\tht)}{\int_{\Theta} \ell(x,\tht)\vp(\tht) \di\tau(\tht) } = \arg\!\max_{\tht} \ell(x,\tht)\vp(\tht)
$$
where $\ell(x,\tht) = \int f(x,y,\tht) \di\mu(y)$, hence the MAP predictor
$$\widetilde{p}(X) = \esp_{\tht} (Y|X)\, \Big\vert_{\tht=\widetilde{\tht}(X)} = m(X,\widetilde{\tht})$$
under  Assumption~\ref{A1}. It is noteworthy that, if $\Theta=\reel$ and one chooses the improper prior $1\cdot \lambda$, where $\lambda$ is Lebesgue measure, the obtained estimator is the maximum likelihood (MLE). Note also that, if $\ell(x,\tht)\vp(\tht)$ is symmetric with respect to $\widetilde{\theta}(X)$, the MAP and the Bayes estimator of $\tht$ coincide. Finally, it is clear that, under classical regularity conditions, the MAP and the MLE have the same asymptotic behaviour as well almost surely as in distribution. Now, the MAP has some drawbacks: it is often difficult to compute and uniqueness is not guaranteed. We will use the MAP in Sections \ref{s5} to \ref{s7}.

\section{Properties of Bayesian predictors}\label{s4}
We give below some useful properties of Bayesian predictors. Here, we suppose that $p_0(X)$ does exist and is defined by relation \eqref{e31}.
\subsection{Admissibility}
A Bayesian predictor is said to be unique if it differs, for any other Bayesian predictor, only on a set ${\cal N}$ with $\p_{\tht}({\cal N}) = 0$ for all $\theta\in\Theta$ \citep[see][p. 323]{LC98}. Then, we have
\begin{proposition} \label{p1}
A Bayesian predictor is admissible as soon as it is unique.
\end{proposition}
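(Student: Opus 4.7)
The plan is to argue by contradiction, exploiting the fact that $p_0$ minimizes the Bayesian risk $r(\cdot,Y)$ together with the strong sense of uniqueness (agreement $\p_{\tht}$-a.s.\ for every $\tht$). So I would begin by assuming that $p_0(X)$ is the unique Bayesian predictor and that it is not admissible: there exists a predictor $q(X)$ with
\[
\esp_{\tht}(q(X)-Y)^2 \le \esp_{\tht}(p_0(X)-Y)^2 \quad\text{for all } \tht\in\Theta,
\]
with strict inequality at some $\tht_0\in\Theta$. The goal is to drive this to a contradiction via two elementary steps: integrate against the prior, and then invoke uniqueness.

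First I would integrate the above inequality against $\vp(\tht)\di\nu(\tht)$. This immediately yields $r(q(X),Y)\le r(p_0(X),Y)$. Since by \eqref{e31} the Bayesian predictor $p_0(X)=\esp(Y|X)$ is the (unique up to $\mathds{Q}$-null sets) minimizer of $r(\cdot,Y)$ over all square-integrable predictors, we must in fact have equality $r(q(X),Y)=r(p_0(X),Y)$; otherwise $r(q(X),Y)<r(p_0(X),Y)$ contradicts the minimizing property of $p_0(X)$.

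Next, equality of Bayesian risks means $q(X)$ is itself a Bayesian predictor. By the stated uniqueness, $q(X)$ and $p_0(X)$ differ only on a set $\mathcal{N}$ with $\p_{\tht}(\mathcal{N})=0$ for every $\tht\in\Theta$, in particular for $\tht=\tht_0$. Consequently
\[
\esp_{\tht_0}(q(X)-Y)^2 = \esp_{\tht_0}(p_0(X)-Y)^2,
\]
which contradicts the strict inequality assumed at $\tht_0$. This closes the argument and proves admissibility.

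I do not anticipate a real obstacle: the whole proof is essentially a packaging of the minimizing property of $p_0(X)$ plus the uniqueness convention. The only subtlety worth flagging explicitly is the translation from ``strict inequality in frequentist risk at one $\tht_0$'' to either ``strict inequality in Bayes risk'' or ``equality of Bayes risks''; the dichotomy handles both cases cleanly without needing any continuity in $\tht$ or any positivity of $\vp$, so Assumption~\ref{A1} is used only implicitly, via the existence and uniqueness of $p_0(X)$ provided by \eqref{e31}.
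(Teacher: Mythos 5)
Your proof is correct and follows essentially the same route as the paper's: integrate the domination inequality against the prior, use the minimizing property of $p_0(X)$ to force equality of Bayes risks, and then invoke uniqueness to contradict the strict inequality at $\tht_0$. The only difference is that you spell out the final contradiction step explicitly, which the paper leaves implicit.
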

\begin{proof}
 If $p_0(X)$ is not admissible, there exists a predictor $p(X)$ such that
$$\esp_{\tht}(p(X)-Y)^2 \le\esp_{\tht}(p_0(X) - Y)^2, \; \tht\in \Theta.$$
Integrating with respect to $\tau$ entails $r(p(X),Y)\le r(p_0(X),Y)$,  but, since $p_0(X)$ is Bayesian, it follows that
$r(p(X),Y) = r(p_0(X),Z)$ and uniqueness of $p_0(X)$ gives
$p(X)=p_0(X) \;\; (\p_{\tht}$  a.s. for all  $\tht$).  \end{proof}
\subsection{$Y$-Sufficiency}
A statistic $S=S(X)$ is said to be $Y$-sufficient (or sufficient for predicting $Y$) if
\begin{itemize}
\item[(a)] $S$ is sufficient in the statistical model associated with $X$: there exists a version of the conditional distribution of $X$ given $S$, say $\mathds{Q}^S$, that does not depend on $\tht$.
\item[(b)] $X$ and  $Y$ are conditionally independent given $S$.
\end{itemize}
Note that this does not imply that $\esp_{\tht}\big(Y|S(X)\big)$ is constant with respect to $\tht$ since the sufficient statistic is in the submodel generated by  $X$ (see example of the Poisson process in Section \ref{s5}). If $S$ is $Y$-sufficient, it is then possible to derive a Rao-Blackwell theorem as well as a factorization theorem  \citep[cf.][]{BB07}. Now, we have
\begin{lem}\label{l3}
If $S$ is $Y$-sufficient and Assumption~\ref{A1} holds, then
\begin{equation}\label{e41}
\esp_{\tht}(Y|X) = \esp_{\tht} \big( Y|S(X)\big), \;\; \tht\in\Theta.
\end{equation}
\end{lem}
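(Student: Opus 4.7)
The plan is to exploit condition (b) of $Y$-sufficiency, namely the $\p_{\tht}$-conditional independence of $X$ and $Y$ given $S(X)$, together with the elementary observation that $S(X)$ is itself $\sigma(X)$-measurable. Condition (a) of the definition and Assumption~\ref{A1} play no active role in the argument; the latter merely guarantees that the conditional expectations involved are well-defined.

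Concretely, I would proceed in two steps. First, since $S(X)$ is $\sigma(X)$-measurable, one has $\sigma(X)\vee \sigma(S(X)) = \sigma(X)$, hence
$$\esp_{\tht}(Y|X) = \esp_{\tht}\bigl(Y \,\big|\, X, S(X)\bigr).$$
Second, the standard characterization of conditional independence says that $X \indep Y \mid S(X)$ under $\p_{\tht}$ is equivalent to
$$\esp_{\tht}\bigl(Y \,\big|\, X, S(X)\bigr) = \esp_{\tht}\bigl(Y \,\big|\, S(X)\bigr).$$
Chaining these two displayed equalities yields \eqref{e41} for every $\tht\in\Theta$.

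There is essentially no obstacle: the result is an immediate consequence of a standard measure-theoretic lemma on conditional independence. One could alternatively write a short density-based proof, combining the factorization $f_{\tht}(x,y|s) = f_{\tht}(x|s)\,f_{\tht}(y|s)$ guaranteed by (b) with Bayes' rule to obtain $f_{\tht}(y|x) = f_{\tht}(y|S(x))$, but the abstract version above is cleaner and avoids any technicality concerning regular versions of conditional densities.
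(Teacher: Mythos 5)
Your proof is correct, but it follows a genuinely different route from the paper's. You invoke the standard measure-theoretic characterization of conditional independence, namely that $X\indep Y\mid S(X)$ under $\p_{\tht}$ is equivalent to $\esp_{\tht}\bigl(g(Y)\,\big|\,\sigma(X)\vee\sigma(S(X))\bigr)=\esp_{\tht}\bigl(g(Y)\,\big|\,S(X)\bigr)$ for bounded measurable $g$ (extended to integrable $Y$ by truncation), and then observe that $\sigma(X)\vee\sigma(S(X))=\sigma(X)$. The paper instead runs a Hilbert-space argument: it writes $\esp_{\tht}\bigl(Y|S(X)\bigr)=\esp_{\tht}\bigl(\esp_{\tht}(Y|X)\,\big|\,S(X)\bigr)$ by the tower property, uses condition (b) only through the product factorization $\esp_{\tht}\bigl(Y\cdot\esp_{\tht}(Y|X)\,\big|\,S(X)\bigr)=\esp_{\tht}\bigl(Y|S(X)\bigr)\cdot\esp_{\tht}\bigl(\esp_{\tht}(Y|X)\,\big|\,S(X)\bigr)$, takes expectations to conclude that $\norm{\esp_{\tht}(Y|X)}_{L^2(\p_{\tht})}=\norm{\esp_{\tht}(Y|S(X))}_{L^2(\p_{\tht})}$, and finishes by noting that $\esp_{\tht}\bigl(Y|S(X)\bigr)$ is the orthogonal projection of $\esp_{\tht}(Y|X)$ onto $L^2_{S(X)}$, so equality of norms forces equality of the two random variables. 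The paper's argument therefore needs $Y\in L^2(\p_{\tht})$ (which is assumed throughout for the quadratic risk to make sense), whereas yours works already for integrable $Y$ and is shorter; the paper's version has the merit of being self-contained, relying only on the elementary product rule for conditionally independent variables rather than on the equivalence you cite, which one would otherwise have to prove or reference. You are also right that condition (a) and Assumption~\ref{A1} play no active role in either argument beyond guaranteeing that the objects are well defined. The only point worth making explicit in your write-up is the passage from bounded $g(Y)$ to the unbounded, integrable $Y$ in the conditional-independence characterization; it is routine but should be acknowledged.
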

\begin{proof} We have $$\esp_{\tht}\big(Y|S(X)\big)= \esp_{\tht}\Big(\esp_{\tht}(Y|X) \big|S(X)\Big),$$ and applying (b) to $Y$ and $\esp_{\tht}(Y|X)$ we obtain
$$
\esp_{\tht}\big(Y \cdot \esp_{\tht}(Y|X)\big|S(X)\big) = \esp_{\tht}\big(Y|S(X)\big) \cdot \esp_{\tht}\Big( \esp_{\tht}(Y|X)\big| S(X)\big)= \Big( \esp_{\tht}\big(Y|S(X)\big)\Big)^2.
$$
Taking expectation and noting that
$\esp_{\tht} \big(Y\cdot \esp_{\tht}(Y|X)\big) =\esp_{\tht}\Big( \big( \esp_{\tht}(Y|X)\big)^2\Big)$,
entails
$$\esp_{\tht}\Big( \big( \esp_{\tht}(Y|X)\big)^2\Big)= \esp_{\tht}\Big( \big( \esp_{\tht}(Y|S(X))\big)^2\Big),$$
that is $\norm{\esp_{\tht}(Y|X)}^2_{L^2(\p_{\tht})} = \norm{\esp_{\tht}\big(Y|S(X)\big)}^2_{L^2(\p_{\tht})}$. This implies relation \eqref{e41} since $\esp_{\tht}\big(Y|S(X)\big)$ is the projection of $\esp_{\tht}(Y|X)$ on $L^2_{S(X)}$.\end{proof}

Note that, if $(X,Y)$ has a strictly positive density of the form $f(x,y,\tht) = L(S(x),y,\tht)$, one obtains \eqref{e41} by a direct computation. Concerning the Bayesian predictor, we have
\begin{proposition} \label{p2}
If $p_0$ is unique and $S$ is $Y$-sufficient, then $$p_0(X) =\esp\big( Y|S(X) \big).$$
\end{proposition}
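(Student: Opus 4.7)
The plan is to combine Remark~\ref{r31} with Lemma~\ref{l3} and a conditional-independence step. By Remark~\ref{r31}, $p_0(X) = \esp\big(m(X,\mathbb{T})\big|X\big)$ with $m(X,\tht) = \esp_{\tht}(Y|X)$. Applying Lemma~\ref{l3} to the $Y$-sufficient statistic $S$ gives $m(X,\tht) = \esp_{\tht}\big(Y|S(X)\big)$, which is $\sigma(S(X))$-measurable for every $\tht$; write it as $n(S(X),\tht)$. Thus $p_0(X) = \esp\big(n(S(X),\mathbb{T})\big|X\big)$, and the remaining task is to collapse the conditioning from $X$ down to $S(X)$.

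The crucial intermediate step is to prove that $\mathbb{T}$ and $X$ are conditionally independent given $S(X)$ under $\mathds{Q}$. For this I would invoke condition~(a) of $Y$-sufficiency through the factorization theorem: the $\p_{\tht}$-marginal density of $X$ decomposes as $f_X(x,\tht) = g\big(x\,|\,S(x)\big)\,h\big(S(x),\tht\big)$ with $g$ free of $\tht$, so under $\mathds{Q}$ the joint density of $(X,\mathbb{T})$ reads $g\big(x\,|\,S(x)\big)\,h\big(S(x),\tht\big)\vp(\tht)$. The posterior density of $\mathbb{T}$ given $X$ is therefore proportional to $h(S(X),\tht)\vp(\tht)$, which depends on $X$ only through $S(X)$; this is exactly $\mathbb{T}$ and $X$ conditionally independent given $S(X)$. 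It follows that
$$\esp\big(n(S(X),\mathbb{T})\,\big|\,X\big) = \esp\big(n(S(X),\mathbb{T})\,\big|\,S(X)\big),$$
and since $n(S(X),\mathbb{T}) = \esp\big(Y\,|\,S(X),\mathbb{T}\big)$ by the same argument as for $m$, the tower property gives $p_0(X) = \esp\big(Y|S(X)\big)$. The uniqueness hypothesis is then used to identify this a.s.\ equality with \emph{the} Bayesian predictor, rather than merely some version of it.

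The main obstacle I anticipate is the conditional-independence claim: it is the one step that genuinely uses condition~(a) of $Y$-sufficiency (condition~(b) enters only implicitly, through Lemma~\ref{l3}). Under Assumption~\ref{A1}, strict positivity of the joint density allows the factorization argument to proceed cleanly without null-set technicalities, so the conditional law of $\mathbb{T}$ given $X$ is unambiguously the one read off from the Bayes formula. Everything else — the reduction via Remark~\ref{r31}, the rewriting of $m(X,\tht)$ through Lemma~\ref{l3}, and the final tower-property step — is essentially bookkeeping.
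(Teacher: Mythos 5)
Your proof is correct, but it follows a genuinely different route from the paper's. The paper argues at the level of risks: it invokes the Rao--Blackwell theorem for prediction to get $p_1(X):=\esp^{S(X)}\big(\esp(Y|X)\big)\prec p_0(X)$, then uses Proposition~\ref{p1} (admissibility, which is where uniqueness enters) to force $p_0=p_1=\esp\big(Y|S(X)\big)$. You instead compute directly: Remark~\ref{r31} and Lemma~\ref{l3} reduce the problem to collapsing $\esp\big(n(S(X),\mathbb{T})\,\big|\,X\big)$ down to conditioning on $S(X)$, and you justify this by showing that the posterior of $\mathbb{T}$ given $X$ depends on $X$ only through $S(X)$ (Bayes-sufficiency of $S$, obtained from condition~(a) via factorization, which is clean under the strict positivity in Assumption~\ref{A1}). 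The trade-off is instructive: your argument actually establishes the identity $\esp(Y|X)=\esp\big(Y|S(X)\big)$ $\mathds{Q}$-a.s.\ without ever using the uniqueness hypothesis (it serves only to pin down the version of $p_0$, as you note), so in that sense it proves slightly more; on the other hand it leans explicitly on Assumption~\ref{A1} and on the density factorization, whereas the paper's Rao--Blackwell/admissibility argument is shorter and does not require writing down any densities. Both are valid; yours makes visible exactly where each of conditions~(a) and~(b) of $Y$-sufficiency is used, which the paper's proof hides inside the Rao--Blackwell step.
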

\begin{proof}  Since $p_0(X) = \esp(Y|X)$, the Rao-Blackwell theorem for prediction \citep[cf.][p. 15]{BB07} entails
$p_1(X) := \esp^{S(X)}\big( \esp(Y |X)\big) \prec p_0(X)$ where $\esp^{S(X)}$ is conditional expectation with respect to $\mathds{Q}^S$ in (a). Now, from Proposition \ref{p1}, $p_0$ is admissible, thus
$$p_0(X) =p_1(X)= \esp^{S(X)}\big( \esp(Y |X)\big) = \esp\big( Y | S(X)\big).$$  \end{proof}
\subsection{Decomposition of the conditional expectation}
We now consider the special case where the conditional expectation admits the following decomposition:
\begin{equation}\label{e42}
\esp_{\tht} (Y|X) = A(X) + B(\tht) C(X) + D(\tht), \;\;\tht\in\Theta
\end{equation}
where $A$, $B\otimes C$, $D\in L^2(F\times \Theta, {\cal F}\otimes{\cal T},\mathds{Q}_{(X,\mathbb{T})})$, $\mathds{Q}_{(X,\mathbb{T})}$ being the distribution of $(X,\mathbb{T})$. Then, the Bayesian predictor has also a special form:
\begin{proposition} \label{p3}
Suppose that Assumption~\ref{A1} is fulfilled. If $\,\esp_{\tht}(Y|X)$ satisfies \eqref{e42}, the associated Bayesian predictor is given by
\begin{equation}\label{e43}
p_0(X) = A(X) + \esp(B(\mathbb{T})|X)\cdot C(X) +\esp(D(\mathbb{T})|X).
\end{equation}
In particular, if $X$ and $Y$ are independent and $D(\tht) = \esp_{\tht}(Y)$, the predictor reduces to the estimator
$p_0(X) = \esp(D(\mathbb{T})|X)$.

\end{proposition}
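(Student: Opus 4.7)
The plan is to invoke Remark~\ref{r31}, which gives the alternative representation
$$p_0(X) = \esp\bigl(m(X,\mathbb{T})\,\big|\,X\bigr), \qquad m(X,\mathbb{T}) = \esp\bigl(Y\,\big|\,X,\mathbb{T}\bigr).$$
Under Assumption~\ref{A1} and the regularity of the joint density, $m(X,\mathbb{T})$ is obtained by evaluating the deterministic map $(x,\tht)\mapsto \esp_{\tht}(Y|X=x)$ at $(X,\mathbb{T})$. Hypothesis \eqref{e42} therefore yields
$$m(X,\mathbb{T}) = A(X) + B(\mathbb{T})\,C(X) + D(\mathbb{T})$$
pointwise on $F\times\Theta$, and the $L^2$ hypothesis on $A$, $B\otimes C$, $D$ ensures all quantities are integrable under $\mathds{Q}$.

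Next I would take the conditional expectation with respect to $X$ on both sides. By linearity,
$$p_0(X) = \esp\bigl(A(X)\,\big|\,X\bigr) + \esp\bigl(B(\mathbb{T})C(X)\,\big|\,X\bigr) + \esp\bigl(D(\mathbb{T})\,\big|\,X\bigr).$$
Since $A(X)$ and $C(X)$ are $\sigma(X)$-measurable, the first term reduces to $A(X)$ and $C(X)$ factors out of the second, giving precisely \eqref{e43}. There is no real obstacle here~; the only point worth checking is that one is allowed to pull $C(X)$ out of the conditional expectation, which follows from its $\sigma(X)$-measurability combined with the $L^2$ integrability of $B\otimes C$ and standard properties of conditional expectation under $\mathds{Q}$.

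For the final assertion, I would argue as follows. If $X$ and $Y$ are independent under $\p_{\tht}$ for every $\tht$, then $\esp_{\tht}(Y|X) = \esp_{\tht}(Y)$ $\p_{\tht}$-a.s., so that \eqref{e42} holds with $A\equiv 0$, $B(\tht)C(X)\equiv 0$ and $D(\tht)=\esp_{\tht}(Y)$. Plugging this into \eqref{e43} collapses the formula to $p_0(X) = \esp(D(\mathbb{T})|X)$, which is in fact the Bayes estimator of $D(\mathbb{T}) = \esp_{\mathbb{T}}(Y)$. This step is essentially a direct specialisation and should only require a sentence. The mildly delicate point throughout is conceptual rather than technical: $\esp(B(\mathbb{T})|X)$ and $\esp(D(\mathbb{T})|X)$ are posterior expectations under the joint law $\mathds{Q}$, and must not be confused with the prior expectations $\esp B(\mathbb{T})$, $\esp D(\mathbb{T})$—the data $X$ informs them through the posterior density of $\mathbb{T}$ given $X$.
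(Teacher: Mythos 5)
Your proposal is correct and follows essentially the same route as the paper: substitute the decomposition \eqref{e42} into $m(X,\mathbb{T})$, apply Remark~\ref{r31} to write $p_0(X)=\esp\big(m(X,\mathbb{T})\big|X\big)$, and use the standard properties of conditional expectation ($\sigma(X)$-measurability of $A(X)$ and $C(X)$) to obtain \eqref{e43}, the last assertion being a direct specialisation. The paper's proof is just a terser version of yours.
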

\begin{proof} Relation \eqref{e42} entails
$m(X,\mathbb{T}) = A(X) + B(\mathbb{T})\cdot C(X) + D(\mathbb{T}),$
and Remark~\ref{r31} gives $p_0(X) = \esp(m(X,\mathbb{T})|X)$ hence \eqref{e43} from the properties of conditional expectation. The last assertion is a special case of \eqref{e43}.  \end{proof}
\subsection{Unbiasedness}
A predictor $p(X)$ of $Y$ is said to be unbiased if $\esp_{\tht}\, p(X) =\esp_{\tht}(Y)$, $\tht\in\Theta$. A Bayesian estimator is, in general, not unbiased, in fact we have the following:
\begin{lem}[Blackwell-Girschick]\label{l4} Let $\widehat{\vp}(X)$ be an unbiased Bayesian estimator of $\vp(\tht)$, then
$$\esp\big( \widehat{\vp}(X) -\vp(\mathbb{T})\big)^2= 0
$$
where $\esp$ denotes here expectation taken from $\q_{(X,\mathbb{T})}$.
\end{lem}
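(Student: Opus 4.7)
The plan is to compute $\esp(\widehat{\vp}(X) - \vp(\mathbb{T}))^2$ by expanding the square and evaluating the cross term two different ways: once by conditioning on $X$ (using the Bayesian property), and once by conditioning on $\mathbb{T}$ (using the unbiasedness). Showing that these two evaluations coincide with the two square terms forces the full expectation to be zero.

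More precisely, I would first record the two key identities. The Bayesian property of $\widehat{\vp}(X)$ as the minimizer of the Bayesian risk for estimating $\vp(\tht)$ gives, as in \eqref{e31}, $\widehat{\vp}(X) = \esp(\vp(\mathbb{T})\mid X)$ (a.s. $\q$). Conditioning the cross term on $X$ and using this identity yields
$$\esp\bigl(\widehat{\vp}(X)\,\vp(\mathbb{T})\bigr) = \esp\Bigl(\widehat{\vp}(X)\esp\bigl(\vp(\mathbb{T})\mid X\bigr)\Bigr) = \esp\bigl(\widehat{\vp}(X)^2\bigr).$$
On the other hand, unbiasedness means $\esp_{\tht}\widehat{\vp}(X)=\vp(\tht)$ for every $\tht\in\Theta$, which translates in the Bayesian scheme into $\esp\bigl(\widehat{\vp}(X)\mid\mathbb{T}\bigr)=\vp(\mathbb{T})$ a.s. Conditioning the cross term on $\mathbb{T}$ instead then gives
$$\esp\bigl(\widehat{\vp}(X)\,\vp(\mathbb{T})\bigr) = \esp\Bigl(\vp(\mathbb{T})\esp\bigl(\widehat{\vp}(X)\mid\mathbb{T}\bigr)\Bigr) = \esp\bigl(\vp(\mathbb{T})^2\bigr).$$

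Combining these, the common value of the cross term equals both $\esp\widehat{\vp}(X)^2$ and $\esp\vp(\mathbb{T})^2$, so
$$\esp\bigl(\widehat{\vp}(X)-\vp(\mathbb{T})\bigr)^2 = \esp\widehat{\vp}(X)^2 - 2\esp\bigl(\widehat{\vp}(X)\vp(\mathbb{T})\bigr) + \esp\vp(\mathbb{T})^2 = 0,$$
which is the desired conclusion.

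The only subtle point worth being careful about is the translation of the frequentist unbiasedness condition (which holds $\p_{\tht}$-a.s.\ for every $\tht$) into the $\q$-conditional statement $\esp(\widehat{\vp}(X)\mid\mathbb{T})=\vp(\mathbb{T})$; this is a direct application of the definition of $\q$ as the joint law built from the conditional densities $f(x,y,\tht)$ and the prior $\vp(\tht)\di\nu(\tht)$ in Assumption~\ref{A1}, together with integrability of $\widehat{\vp}(X)\vp(\mathbb{T})$ (which is implicit since the statement assumes the squared risk is defined). Apart from that, the argument is a formal manipulation and should not require any additional hypothesis.
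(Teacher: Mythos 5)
Your argument is correct and is exactly the classical Blackwell--Girshick proof: evaluate the cross term $\esp\big(\widehat{\vp}(X)\vp(\mathbb{T})\big)$ once by conditioning on $X$ (using $\widehat{\vp}(X)=\esp(\vp(\mathbb{T})\mid X)$) and once by conditioning on $\mathbb{T}$ (using unbiasedness), so that all three terms in the expansion of the square coincide. The paper does not reproduce this argument but simply cites Lehmann and Casella (p.~234), where the proof given is the same double-conditioning computation you wrote out.
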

\begin{proof} See \citet[][p. 234]{LC98}.   \end{proof}
The situation is more intricate concerning a Bayesian predictor. Note first that, if
\begin{equation}\label{e44}
\esp_{\tht} (p_0(X)) = \esp_{\tht}(Y),\;\;\tht \in \Theta
\end{equation}
then, $p_0(X)$ is an unbiased estimator of $\esp_{\tht}(Y)$ but it is not necessarily a Bayesian estimator of $\esp_{\tht}(Y)$. Recall that the Bayesian interpretation of  \eqref{e44} is: $$\esp(p_0(X)|\mathbb{T}=\tht) = \esp(Y|\mathbb{T}=\tht), \;\;\tht\in\Theta.$$

Now, we have the following result:
\begin{proposition}\label{p4}
If the Bayesian risk satisfies
\begin{equation}\label{e45}
\esp\big( p_0(X) - m(X,\mathbb{T})\big)^2 = 0
\end{equation}
then $p_0(X)$ is unbiased for predicting $Y$. Conversely under Assumption~\ref{A1}, if
\begin{equation}\label{e46}
m(X,\tht)= A(X) + D(\tht),\;\tht\in\Theta
\end{equation}
and if $p_0(X)$ is unbiased, then \eqref{e45} holds.
\end{proposition}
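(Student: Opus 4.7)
My plan is to handle the two implications separately, using the tools already developed in the excerpt.

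\textbf{Forward direction.} Under Assumption~\ref{A1}, Fubini gives
\[
\esp\bigl(p_0(X) - m(X,\mathbb{T})\bigr)^2 = \int_{\Theta} \esp_{\tht}\bigl(p_0(X) - m(X,\tht)\bigr)^2 \vp(\tht) \di\nu(\tht),
\]
so \eqref{e45} together with strict positivity of $\vp$ forces $p_0(X) = m(X,\tht)$ $\p_{\tht}$-a.s. for $\nu$-almost every $\tht \in \Theta$. Taking $\esp_{\tht}$ and applying the tower property $\esp_\tht m(X,\tht) = \esp_\tht \esp_\tht(Y\vert X) = \esp_\tht Y$ yields $\esp_\tht p_0(X) = \esp_\tht Y$; the continuity in $\tht$ provided by Assumption~\ref{A1} lets us upgrade a $\nu$-a.e.\ statement to every $\tht$ if needed.

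\textbf{Converse.} From \eqref{e46}, Proposition~\ref{p3} applied with $B \equiv 0$ gives
\[
p_0(X) = A(X) + \esp\bigl(D(\mathbb{T}) \,\big\vert\, X\bigr).
\]
Set $\widehat{D}(X) := \esp(D(\mathbb{T}) \vert X)$, which is by construction the Bayesian estimator of $D(\mathbb{T})$ for squared error loss. Using \eqref{e46} to compute $\esp_\tht Y = \esp_\tht A(X) + D(\tht)$ and expanding $\esp_\tht p_0(X) = \esp_\tht A(X) + \esp_\tht \widehat{D}(X)$, the unbiasedness hypothesis reduces to $\esp_\tht \widehat{D}(X) = D(\tht)$ for every $\tht \in \Theta$, so $\widehat{D}(X)$ is an \emph{unbiased} Bayesian estimator of $D(\mathbb{T})$. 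Lemma~\ref{l4} (Blackwell--Girschick) then yields $\esp\bigl(\widehat{D}(X) - D(\mathbb{T})\bigr)^2 = 0$, and since the decomposition gives
\[
p_0(X) - m(X,\mathbb{T}) = \widehat{D}(X) - D(\mathbb{T}),
\]
relation \eqref{e45} follows.

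\textbf{Main obstacle.} Both directions are essentially bookkeeping once Proposition~\ref{p3} and the Blackwell--Girschick lemma are in hand; the converse is really a trick of reading off the Bayesian-estimator structure hidden inside $p_0(X)$ under the decomposition \eqref{e46}. The only mildly delicate point is the forward implication, where one must pass from a $\nu$-almost-everywhere equality to every $\tht$, and the continuity clause in Assumption~\ref{A1} is precisely what legitimates this step.
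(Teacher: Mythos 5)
Your proof is correct and follows essentially the same route as the paper's: the forward direction disintegrates the Bayes risk (the paper phrases this as conditioning on $\mathbb{T}$, you phrase it via Fubini, which is the same thing), and the converse uses Proposition~\ref{p3} with $B\equiv 0$ to identify $\esp(D(\mathbb{T})\vert X)$ as an unbiased Bayesian estimator of $D(\mathbb{T})$ and then invokes the Blackwell--Girschick lemma. Your explicit remark about upgrading the $\nu$-a.e.\ identity to all $\tht$ is in fact slightly more careful than the paper's own treatment.
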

\begin{proof} Relation \eqref{e45} implies $\dsp p_0(X) = m(X,\mathbb{T})$, $\q_{(X,\mathbb{T})}$ a.s.,
that is
$$\esp(Y|X)= \esp(Y|X,\mathbb{T})\;\;\; \q_{(X,\mathbb{T})} \;\text{ a.s.}.$$
Conditioning with respect to $\mathbb{T}$ gives $\esp(p_0(X)|\mathbb{T}) = \esp(Y|\mathbb{T})$ which means that $p_0(X)$ is unbiased. Conversely \eqref{e46} and \eqref{e43} in Proposition~\ref{p3}   imply
$$p_0(X) = A(X) + \esp(D(\mathbb{T})|X).$$
Now, since $p_0(X)$ is unbiased, we have
$$\esp(Y|\mathbb{T}) =  \esp(A(X) | \mathbb{T}) + \esp( \esp(D(\mathbb{T})/X) | \mathbb{T}) = \esp(m(X,\mathbb{T})|\mathbb{T})$$
where the last equality follows from $\esp(Y|X,\mathbb{T})=m(X,\mathbb{T})$ and a conditioning on $\mathbb{T}$. But by \eqref{e46},
$$\esp(m(X,\mathbb{T}) | \mathbb{T}) = \esp(A(X)| \mathbb{T}) + \esp(D(\mathbb{T}) |\mathbb{T}).$$
By identification, it means that the Bayesian estimator of $D(\mathbb{T})$ is also unbiased. Then Lemma~\ref{l4} gives
$$\esp\big(p_0(X) - m(X,\mathbb{T})\big)^2 = \esp\big( \esp(D(\mathbb{T}) | X) - D(\mathbb{T})\big)^2 =0. \;\;\;  $$
\end{proof}
\indent In the more general case where $\esp_{\tht}(Y|X)$ has the form \eqref{e42} with non-null $B(\tht)C(X)$, it is possible to find an unbiased Bayesian predictor with a non-vanishing Bayesian risk \citep[cf.][]{Bo12}.

Now for some $\tht_0\in\Theta$, let us define a `Bayesian type' predictor  by
\begin{equation}\label{e47}
p_0(X)= \alpha \,p(X) + (1-\alpha) m(X,\tht_0),\;\;\;(0<\alpha<1),
\end{equation}
where $p(X)$ is an unbiased predictor of $Y$. For these specific predictors, our previous result may be extended as follows.
\begin{proposition} \label{p45}
Suppose that Assumption~\ref{A1} holds and consider a predictor $p_0(X)$ of the form \eqref{e47}.  Then, if $p_0(X)$ is unbiased, it follows that
\begin{equation}\label{e48}
\esp_{\tht}\big(m(X,\tht)\big) = \esp_{\tht}\big(m(X,\tht_0)\big), \;\;\;\tht\in\Theta,
\end{equation}
if, in addition, there exists a $Y$-sufficient complete statistic then $
m(X,\tht)=m(X,\tht_0)$ for all $\tht\in\Theta$ and the problem of prediction is degenerated.
\end{proposition}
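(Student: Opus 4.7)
The plan is to first derive \eqref{e48} by direct algebra from the two unbiasedness assumptions, then use Lemma~\ref{l3} to reduce the degeneracy statement to an equality between functions of the $Y$-sufficient complete statistic $S$, and finally close the argument through completeness.

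For \eqref{e48}, I would expand the unbiasedness identity $\esp_\tht\bigl(p_0(X)\bigr) = \esp_\tht(Y)$ via \eqref{e47} and the unbiasedness $\esp_\tht\bigl(p(X)\bigr) = \esp_\tht(Y)$; a rearrangement produces $(1-\alpha)\bigl[\esp_\tht(Y) - \esp_\tht(m(X,\tht_0))\bigr] = 0$, whence $\esp_\tht(m(X,\tht_0)) = \esp_\tht(Y)$ because $\alpha\in(0,1)$. The tower property $\esp_\tht(Y) = \esp_\tht\bigl(\esp_\tht(Y|X)\bigr) = \esp_\tht(m(X,\tht))$ then yields \eqref{e48}.

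For the degeneracy, the crucial consequence of \eqref{e48} is that the statistic $m(X,\tht_0)$ (which carries no $\tht$-dependence) is an unbiased predictor of $Y$ at \emph{every} $\tht\in\Theta$. Under $Y$-sufficiency, Lemma~\ref{l3} expresses it as a function of $S$, and the Rao--Blackwell/Lehmann--Scheff\'e machinery for prediction (paralleling Proposition~\ref{p2} and using completeness of $S$) identifies it as the unique UMVU predictor of $Y$ among functions of $S$. For arbitrary $\tht$, Lemma~\ref{l3} also gives $m(X,\tht) = \esp_\tht(Y|S)$, so $m(X,\tht)$ too is a function of $S$. Setting $g_\tht(S):=m(X,\tht)-m(X,\tht_0)$, combining \eqref{e48} with completeness of $S$ should force $g_\tht(S)=0$ a.s., hence $m(X,\tht) = m(X,\tht_0)$; the optimal predictor $\esp_\tht(Y|X)$ is then independent of $\tht$, so no information about the parameter can be exploited for prediction, which is exactly the degeneracy.

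The main obstacle is this final completeness step. Completeness of $S$ asserts that a \emph{fixed} function $g(S)$ with $\esp_\tht(g(S))=0$ for every $\tht$ vanishes almost surely, whereas $g_\tht$ depends on $\tht$ and \eqref{e48} provides $\esp_\tht(g_\tht(S))=0$ only at the matching parameter. The resolution will have to exploit the UMVU characterization of $m(X,\tht_0)$ obtained above: since it is the unique $S$-measurable unbiased predictor of $Y$ across \emph{all} parameters, any other $S$-measurable candidate such as $\esp_\tht(Y|S)$ must coincide with it, and it is here that completeness is genuinely used on top of $Y$-sufficiency.
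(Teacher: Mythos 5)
Your derivation of \eqref{e48} is correct and coincides with the paper's: take $\esp_\tht$ in \eqref{e47}, use unbiasedness of $p_0$ and of $p$, cancel the factor $1-\alpha\neq 0$, and identify $\esp_\tht(Y)$ with $\esp_\tht\big(m(X,\tht)\big)$ by the tower property.

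For the degeneracy assertion you have located exactly the delicate point, but your proposed repair does not close it. The Lehmann--Scheff\'e/UMVU uniqueness you invoke holds only within the class of functions $g(S)$ satisfying $\esp_{\tht'}\big(g(S)\big)=\esp_{\tht'}(Y)$ for \emph{every} $\tht'\in\Theta$: relation \eqref{e48} does place the fixed function $m(\cdot,\tht_0)$ in that class, but for a fixed $\tht_1$ the function $m(\cdot,\tht_1)=\esp_{\tht_1}(Y|S)$ is only known to be unbiased at the single matching value $\tht'=\tht_1$, so the step ``any other $S$-measurable candidate must coincide with $m(\cdot,\tht_0)$'' is unjustified. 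The gap is not merely formal: with $X=(X_1,\dotsc,X_n)$ i.i.d.\ ${\cal N}(\tht,1)$, $S=\overline{X}_n$ complete and $Y$-sufficient, and $Y\,|\,X\sim{\cal N}(\tht\overline{X}_n-\tht^2,1)$, one has $m(X,\tht)=\tht\overline{X}_n-\tht^2$, hence $\esp_\tht\big(m(X,\tht)-m(X,0)\big)=0$ for all $\tht$ while $m(X,\tht)-m(X,0)=\tht(\overline{X}_n-\tht)$ is not a.s.\ null; so no argument can derive the conclusion from \eqref{e48} and completeness alone, and additional structure (e.g.\ the decomposition \eqref{e46}, under which $m(X,\tht)-m(X,\tht_0)=D(\tht)-D(\tht_0)$ is deterministic and completeness is not even needed) is required. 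For what it is worth, the paper's own proof makes precisely the leap you were worried about: it applies completeness to the $\tht$-dependent difference $\esp_{\tht}(Y|S(X))-\esp_{\tht_0}(Y|S(X))$ whose expectation vanishes only at the matching parameter, so your write-up is incomplete at the same point where the published argument is.
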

\begin{proof} If $p_0(X)$ is an unbiased predictor of $Y$, one has
$$\esp_{\tht}\big(p_0(X)\big) =\esp_{\tht} \big(m(X,\tht)\big), \;\tht\in\Theta,$$ and taking expectation in \eqref{e47} yields
$$ \esp_{\tht} \big(m(X,\tht)\big)=\alpha \, \esp_{\tht}\big(p(X)\big) + (1-\alpha) \esp_{\tht} \big(m(X,\tht_0)\big)
$$
hence, since $p(X)$ is unbiased, \eqref{e48} follows. Now, if $S(X)$ is a $Y$-sufficient statistic, Lemma~\ref{l3} entails $m(X,\tht) = \esp_{\tht}\big(Y|S(X)\big)$, thus, \eqref{e48} implies
$$\esp_{\tht}\Big( \esp_{\tht} \big(Y|S(X)\big) - \esp_{\tht_0} \big(Y|S(X)\big)\Big) = 0, \;\;\tht\in\Theta,$$
and, since $S(X)$ is complete, one obtains the last result.    \end{proof}
\subsection{Comparing predictors}
The following elementary lemma allows to compare Bayesian predictors with the classical unbiased predictor. We will use it in the next sections.
\begin{lem}\label{l4b} Suppose that
$$m(X,\tht) = A(X) + d\cdot \tht \;\;\;(d\not=0)$$ and let $p(X)$ be an unbiased predictor of $Y$ taking the form
$$p(X) = A(X) + d \cdot \overline{\tht}(X).$$  For some $\tht_0\in \Theta$, consider the `Bayesian type' predictor
$$p_0(X) = \alpha\, p(X)+(1-\alpha)m(X,\tht_0)$$
where $\alpha\in]0,1[$. Then
\begin{equation}\label{e49}
p_0 \prec p \Longleftrightarrow \abs{\tht-\tht_0} \le \Big(\frac{1+\alpha}{1-\alpha}\Big)^{\frac{1}{2}}\cdot \Big(\esp_{\tht}\big(\overline{\tht}(X) -\tht\big)^2\Big)^{\frac{1}{2}}.
\end{equation}
\end{lem}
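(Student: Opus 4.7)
The plan is to reduce the preference relation to a computation on $m(X,\tht)$ via Lemma~\ref{l1}, extract from the unbiasedness hypothesis that $\overline{\tht}(X)$ is an unbiased estimator of $\tht$, then compute both mean-square errors explicitly and solve the resulting scalar inequality. I will read the conclusion pointwise in $\tht$: for each fixed $\tht\in\Theta$, the inequality $\esp_{\tht}(p_0(X)-Y)^2\le \esp_{\tht}(p(X)-Y)^2$ is equivalent to \eqref{e49}.

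First, by Lemma~\ref{l1}, comparing $p_0(X)$ and $p(X)$ for predicting $Y$ is the same as comparing them for predicting $m(X,\tht)=A(X)+d\tht$. A direct substitution gives
\[
p(X)-m(X,\tht) = d\bigl(\overline{\tht}(X)-\tht\bigr),
\]
and
\[
p_0(X)-m(X,\tht) = d\bigl[\alpha(\overline{\tht}(X)-\tht) + (1-\alpha)(\tht_0-\tht)\bigr].
\]
Next, the unbiasedness of $p(X)$ combined with $\esp_{\tht}(Y)=\esp_{\tht}(A(X))+d\tht$ and $\esp_{\tht}(p(X))=\esp_{\tht}(A(X))+d\,\esp_{\tht}(\overline{\tht}(X))$ forces $\esp_{\tht}(\overline{\tht}(X))=\tht$, so in the expansion of the square above the cross term $2\alpha(1-\alpha)(\tht_0-\tht)\esp_{\tht}(\overline{\tht}(X)-\tht)$ vanishes.

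Setting $V_{\tht}:=\esp_{\tht}(\overline{\tht}(X)-\tht)^2$, I then obtain
\[
\esp_{\tht}\bigl(p(X)-m(X,\tht)\bigr)^2 = d^2 V_{\tht},\qquad
\esp_{\tht}\bigl(p_0(X)-m(X,\tht)\bigr)^2 = d^2\bigl[\alpha^2 V_{\tht}+(1-\alpha)^2(\tht_0-\tht)^2\bigr].
\]
Since $d\ne 0$, $p_0\prec p$ at $\tht$ is equivalent to $(1-\alpha)^2(\tht-\tht_0)^2\le (1-\alpha^2)V_{\tht}$, and factoring $(1-\alpha)$ (which is positive since $0<\alpha<1$) and taking square roots yields~\eqref{e49}.

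There is no real obstacle here; the argument is a short algebraic computation. The only subtle point worth stating carefully is that the unbiasedness of $p(X)$ for $Y$ automatically implies $\overline{\tht}(X)$ is unbiased for $\tht$, which is what kills the cross term and allows the inequality to reduce to a clean form in terms of $|\tht-\tht_0|$ and $V_{\tht}^{1/2}$.
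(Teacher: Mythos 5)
Your proof is correct and follows essentially the same route as the paper: decompose $p_0(X)-m(X,\tht)$ as $\alpha\bigl(p(X)-m(X,\tht)\bigr)+(1-\alpha)d(\tht_0-\tht)$, use unbiasedness of $p$ to kill the cross term, and solve the resulting quadratic inequality in $|\tht-\tht_0|$. The only cosmetic difference is that you phrase the computation through $\overline{\tht}(X)-\tht$ and its variance $V_{\tht}$ rather than through $\esp_{\tht}\bigl(p(X)-m(X,\tht)\bigr)^2$ directly, which is an equivalent bookkeeping choice.
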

\begin{proof} We have
$$p_0(X) -m(X,\tht)=\alpha \big( p(X) -m(X,\tht)\big)+ (1-\alpha) \big( m(X,\tht_0) -m(X,\tht)\big)$$
then, since $p$ is unbiased,
$$\esp_{\tht}\big(p_0(X) -m(X,\tht)\big)^2=\alpha^2 \esp_{\tht}\big(p(X) -m(X,\tht)\big)^2 +(1-\alpha)^2 d^2 (\tht_0 - \tht)^2$$
thus
$$p_0 \prec p \Longleftrightarrow d^2(1-\alpha)^2(\tht-\tht_0)^2+\alpha^2 \esp_{\tht} \big(p(X) - m(X,\tht)\big)^2 \le \esp_{\tht}\big(p(X) -m(X,\tht)\big)^2$$
and \eqref{e49} follows. \end{proof}

\begin{remark} If $X=X_{(n)} = (X_1,\dotsc,X_n)$ and $\esp_{\tht}\big(\overline{\tht}(X) -\tht\big)^2 = \frac{v^2}{n}$ then the condition becomes
$$\abs{\tht -\tht_0} \le  \Big( \frac{1+\alpha}{1-\alpha} \Big)^{\frac{1}{2}} \cdot \Big(\frac{v^2}{n}\Big)^{\frac{1}{2}}.$$  If one may find $\alpha=\alpha_n$ such that $$\inf\limits_{n\ge 1} \Big(\frac{1+\alpha_n}{1-\alpha_n}\Big)^{\frac{1}{2}}\cdot\Big(\frac{v^2}{n}\Big)^{\frac{1}{2}} \ge b > 0,$$
it follows that $\abs{\tht -\tht_0} \le b$ implies $p_0(X_{(n)}) \prec p(X_{(n)})$ for all $n \ge 1$. Moreover, the choice $A(X) \equiv 0$ in Lemma~\ref{l4b} provides an alternative formulation for comparing Bayesian estimators of $\tht$  versus non Bayesian ones. \end{remark}

\section{Application to Poisson process} \label{s5}

\subsection{The Bayesian predictor}

Let $(N_t,\,t\ge 0)$ be an homogeneous Poisson process with intensity $\tht>0$, $X=(N_t,\; 0\le t\le S)$ is observed and one wants to predict $Y=N_{S+h}$ $(h>0)$, $(S>0)$. This a classical scheme but of interest, since in this case, there exists an unbiased efficient predictor   \citep[see][]{BB07}. Since Lemma~\ref{l1} shows that it is equivalent to predict $m(X,\tht) = \tht h + N_S$, one obtains the unbiased efficient predictor $p(N_S) =  \frac{S+h}{S} N_S=: N_S + \tht_S h$ (with $\theta_S = \frac{N_S}{S}$).

Concerning the Bayesian predictor, a classical prior is $\tau = \Gamma(a,b)$ with density $$ \frac{b^{a}}{\Gamma(a)} \tht^{a-1} \exp(-b\tht)\indi_{]0,+\infty[}(\tht),\;\;\;(a>0,\, b>0).$$ First, since $N_S$ is $N_{S+h}$-sufficient, Lemma~\ref{l3} entails
$$\esp_{\tht}(N_{S+h} | N_t, \, 0\le t\le S) = \esp_{\tht} (N_{S+h} | N_S)$$
and Proposition~\ref{p2} gives $ p_0(N_t, \, 0\le t \le S) = \esp(N_{S+h}|N_S).$
The same property holds for the Bayes estimator given  by
$$\widehat{\tht}_S = \esp(\mathbb{T} | N_S) = \frac{a+N_S}{b+S},$$  and, from Proposition~\ref{p3}, the Bayesian predictor is $$\widehat{p}_0(N_S) = \frac{a+N_S}{b+S} \cdot h + N_S.$$

To compare $\widehat{p}_0$  with $p$, note that $\widehat{\tht}_S= \frac{S}{b+S}\cdot \tht_S + \big( 1 - \frac{S}{b+S}\big) \cdot \frac{a}{b}.$
We deduce that $$\widehat{p}_0(N_S) = \alpha_S \,p(N_S) + (1-\alpha_S) (N_S + \tht_0 h)$$ with  $\alpha_S = \frac{S}{b+S}$ and $\tht_0=\frac{a}{b}$. Since $\esp_{\tht}\big( \tht_S - \tht\big)^2 = \frac{\tht}{S}$, a straightforward consequence of Lemma~\ref{l4b} is
\begin{equation}\label{e51} \widehat{p}_0\prec p  \eqv (\tht - \tht_0)^2 \le \big(\frac{1}{S} + \frac{2}{b}\big)\tht.
\end{equation}
Solving \eqref{e51} in $\tht$, we get that $ p_0\prec p $ iff
$$\tht \in \Big]\tht_0+\frac{1}{2S} + \frac{1}{b}-\sqrt{\Delta},\tht_0+\frac{1}{2S} + \frac{1}{b}+\sqrt{\Delta}\Big[$$ with $\Delta = \big(\tht_0 + \frac{1}{2S} + \frac{1}{b} \big)^2 - \tht_0^2$. Also, from \eqref{e51}, a sufficient condition, holding for all $S$, is $\dsp(\tht-\tht_0)^2\le \frac{2}{b} \tht$ which gives $\theta\in \Big]\tht_0+ \frac{1}{b}- \sqrt{\widetilde{\Delta}},\tht_0+ \frac{1}{b}+ \sqrt{\widetilde{\Delta}}\Big[$ with $\widetilde{\Delta} = \frac{1}{b} (2\tht_0+\frac{1}{b})$, that is  $ p_0\prec p $ if $$\theta\in \Big]\tht_1,\tht_2\Big[:=\Big] \frac{a+1}{b} - \frac{\sqrt{2a+1}}{b},\frac{a+1}{b} + \frac{\sqrt{2a+1}}{b}\Big[.$$

Clearly, one obtains the same result for comparing $\widehat{\theta}_S$ with $\tht_S$. For example, if one chooses  $a=1$, $b= \frac{1}{\tht_0}$ (so that  $\esp(\tau) = \theta_0$) then $\tht_1 = \frac{2-\sqrt{3}}{b}$ and $\tht_2 = \frac{2+\sqrt{3}}{b}$. If $b$ is small, $\theta_2 - \tht_1$ is large but $\tht$ also~!

\smallskip\

Turning to the MAP estimator, one has to compute
$\arg\!\max_{\tht} L(\tht)$  which is equal to $$\arg\!\max_{\tht} e^{-\tht S} \frac{(\tht S)^{N_S}}{N_S!} \frac{b^a}{\Gamma(a)} \tht^{a-1} e^{-\tht b}.$$
We have
$$
\frac{\partial \ln L(\tht)}{\partial\tht} = \frac{\partial}{\partial \tht} \Big( - \tht(S+b) + \frac{N_S+a-1}{\tht}\Big)$$
hence $\widetilde{\tht}_S = \frac{N_S+ a - 1}{b+S}$ where we choose $a\ge 1$ for convenience, inducing the predictor: $$\widetilde{p}_0(N_S) = \frac{N_S+a - 1}{b+S} h + N_S.$$ Replacing $a$ with $a-1$, the previous discussion about $\widehat{p}_0$ holds and one gets, for all $S$, the sufficient condition
$$\widetilde{p}_0 \prec p \Longleftarrow \frac{a}{b} - \frac{{\sqrt{2a-1}}}{b} <\tht <\frac{a}{b} + \frac{{\sqrt{2a-1}}}{b}.$$

\smallskip\

Finally, another method consists in computing the marginal distribution of $(N_S,N_{S+h})$ and then to determine the conditional mode of $N_{S+h}$ given $N_S$. With that method, one obtains a similar predictor. Details are left to the reader.
\subsection{Simulations}
In this section, we compare the unbiased (UP), the Bayesian (BP) and the MAP predictors for various Poisson processes. First, we simulate $N=10^5$ homogeneous Poisson processes with intensity $\theta$ varying in $\{0.5,1,2,5,10\}$. Next, for $S$ in $\{10,15,20,25,30,40,50,75,100\}$ and horizon of prediction $h$ in $\{0.5,1,2,5\}$, we compute an  empirical $L^2$-error of prediction: $$\frac{1}{N} \sum_{j=1}^N \big(N_{S+h}^{(j)} - \widehat{p}(N_{S}^{(j)})\big)^2$$ where $N_{t}^{(j)}$ stands for the $j$-th replicate of the process at time $t$ and $\widehat{p}(N_{S}^{(j)})$ is the predictor under consideration (Bayesian and MAP predictors are computed with a $\Gamma(a,1)$  distribution for the prior). We will also consider the empirical $L^2$-error of estimation (with respect to the probabilistic predictor $\esp_{\tht}(N_{S+h}| N_S)$) defined by $$\frac{1}{N} \sum_{j=1}^N \big(N_S^{(j)} + \tht h - \widehat{p}(N_{S}^{(j)})\big)^2.$$  In Table~\ref{tab1}, we give the rounded $L^2$-errors of estimation according to $S$ as well as prediction errors (enclosed in parentheses) for the unbiased predictor when $\tht= h=1$. To help the comparison, only  the percentage variations of BP and MAP errors (relatively to the UP ones) are reported for $a=1,2,4$. 
Namely, since $\tht=1$, it is expected from \eqref{e51} that $a=4$ represents a bad choice of prior (while $a=1$ corresponds to the best one, and $a=2$ is acceptable). From Table~\ref{tab1}, we observe that:
\begin{itemize}
\item[-] as expected, all errors decrease as $S$ increases~;
\item[-] for all errors and any value of $S$, Bayesian and MAP predictors are better than the unbiased one for  $a=1,2$, with a clearly significant gain for small values of $S$ in the estimation framework~;
\item[-] the bad choice $a=4$ clearly penalizes the predictor, with a significant impact on the $L^2$-error of estimation. Concerning the prediction error, it appears as less sensitive to the prior: indeed this overall error is governed by the probabilistic one, much more important in this case.
\end{itemize}

\begin{table}[h]
\caption{$L^2$ estimation (prediction) error for UP and percentage variation of $L^2$ estimation (prediction) error for BP and MAP, in the case where  $\tht=1$ and $h=1$.}\label{tab1}
\renewcommand{\arraystretch}{1.2}
\scalebox{0.68}{\begin{tabular}{|c||c|c|c||c|c|c||c|c|c|}
\hline
&\multicolumn{3}{|c||}{\bf S=15}&\multicolumn{3}{|c||}{\bf S=20}&\multicolumn{3}{|c|}{\bf S=30}\\
\hline
\bf UP&\multicolumn{3}{|c||}{\bf 0.066 (1.066)}&
\multicolumn{3}{|c||}{\bf 0.050 (1.050)}&
\multicolumn{3}{|c|}{\bf 0.033 (1.036)}
\\\hline
&\bf a=1&\bf a=2&\bf a=4&\bf a=1&\bf a=2&
\bf a=4&\bf a=1&\bf a=2&\bf a=4\\\hline
\bf BP \%&  -12.1(-.74)& -6.3(-.42)& 40.8(2.42)&
-9.3(-.43)&
-4.8(-.23)& 31.5(1.45)& -6.3(-.21)& -3.2(-.11)& 21.8(.69)\\\hline
\bf MAP \%&-6.1(-.33)&-12.1(-.74)&11.3(.64)&-4.7(-.19)&
-9.3(-.43)&8.8(.39)&-3.2(-.10) &-6.3(-.21)&6.2(.19)
\\\hline\hline
&\multicolumn{3}{|c||}{\bf S=40}&\multicolumn{3}{|c||}{\bf S=50} &\multicolumn{3}{|c|}{\bf S=100}\\\hline
\bf UP &\multicolumn{3}{|c||}{\bf 0.025 (1.027)}&\multicolumn{3}{|c||}{\bf 0.020 (1.025)}&\multicolumn{3}{|c|}{\bf 0.010 (1.015)}\\\hline
&\bf a=1&\bf a=2&\bf a=4&\bf a=1&\bf a=2&\bf a=4&\bf a=1&\bf a=2&\bf a=4\\\hline
\bf BP \%&-4.8(-.12)&-2.4(-.05)&16.8(.42)&-3.9(-.08)
&-1.9(-.03)&
13.7(.28)&-2.0(-.02)&-1.0(-.01)&6.7(.06)\\\hline
\bf MAP \%&-2.5(-.06)&-4.8(-.12)&4.8(.13)&-2.0(-.04)&
 -3.9(-.08)&4.0(.09)&-0.9(-.01)&-2.0(-.02)&1.9(.02)\\
 \hline\end{tabular}}\end{table}

\begin{figure}[h]\label{fig1}
\includegraphics[height=6.2cm]{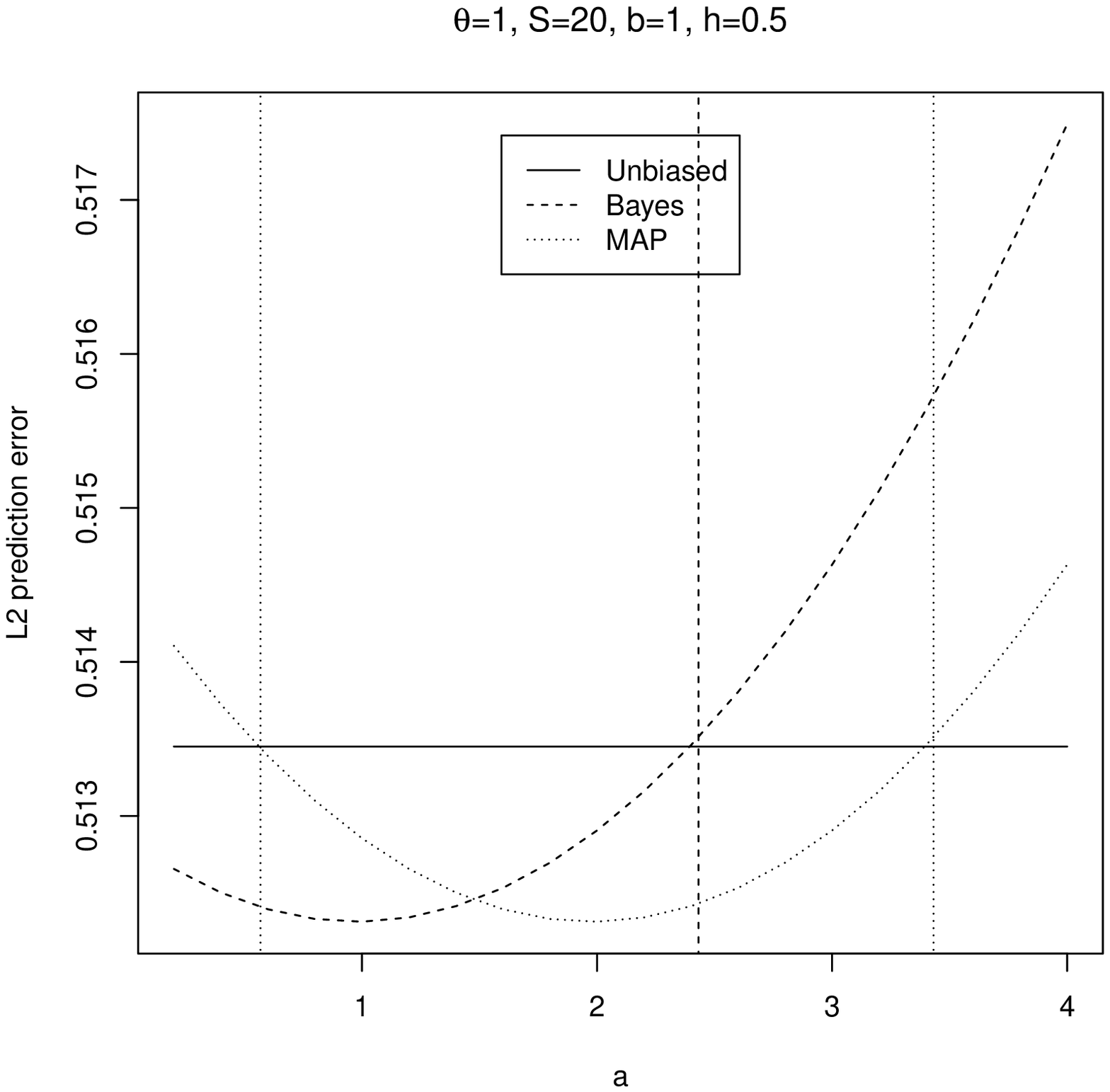}
\includegraphics[height=6.2cm]{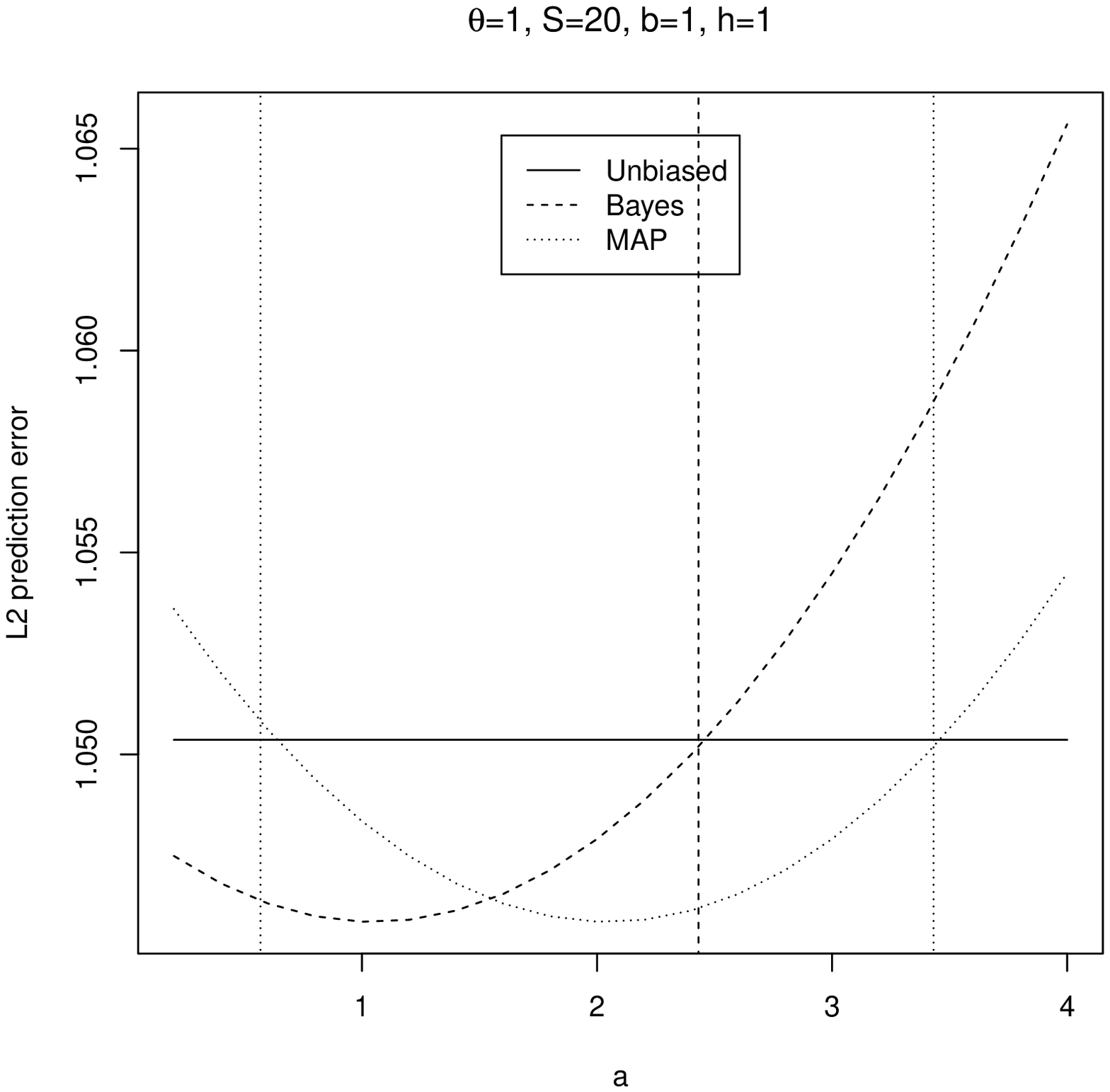}
\caption{{\small$L^2$ prediction error for $\tht=1$ in terms of $a$ with $\Gamma(a,1)$ prior: UP (plain horizontal), BP (dashes), MAP (dots) for $S=20$. Vertical lines corresponds to $a=1 + \sqrt{S^{-1}+2}$ (dashes) and $a=2 \pm \sqrt{S^{-1}+2}$ (dots). On the left : $h=0.5$, on the right : $h=1$.  }}
\end{figure}

In Figure 5.1, the $L^2$-error of prediction is plotted as a function of $a$ for $\tht=1$ and $S=20$. As expected by \eqref{e51}, parabolic curves are obtained and BP (resp. MAP) is better than UP for $a$ in the interval $\Big]\,0, 1 + \sqrt{S^{-1}+2}\,\Big[$ (resp. $\Big]\,2 - \sqrt{S^{-1}+2}, 2+ \sqrt{S^{-1}+2}\,\Big[$). Same conclusions hold for other choices of $h$ and$|$or $\tht$ (see related results of Table~\ref{tab2}). Errors increase as $h$ and|or $\tht$ increase, and a good choice of the prior has a significative impact on the estimation error.


\begin{table}[h]
\caption{$L^2$ estimation (prediction) error, in the case $S=20$, for UP and percentage variations of $L^2$ estimation (prediction) error for BP$i$ and MAP$i$, where $i$ refers to $a=i$.}\label{tab2}
\renewcommand{\arraystretch}{1.2}
\scalebox{0.62}{\begin{tabular}{|c||c|c|c||c|c|c||c|c|c|}
\hline
&\multicolumn{3}{|c||}{\bf $\mathbf\tht$=0.5}&\multicolumn{3}{|c||}{\bf $\mathbf\tht$=5}&\multicolumn{3}{|c|}{\bf $\mathbf\tht$=10}\\
\hline
&\bf h=0.5 &\bf h=1 & \bf h=2& \bf h=0.5 &\bf h=1 & \bf h=2&\bf h=0.5 &\bf h=1 & \bf h=2\\
\hline
\bf UP&\bf .01  (.3) &\bf .02 (.5)  &\bf .1 (1.1) &\bf .06 (2.6) &\bf .25 (5.3) &\bf 1 (11)&\bf .12  (5.1)&\bf .5
(10.4)& \bf 1.99 (22.1)
\\\hline
\bf  BP1 \% &-6.9 (-.16)&-6.9 (-.33)&-6.9 (-.61)&5.1 (.2)&5.1 (.34)&5.1 (.61)&28 (.75)&28 (1.48)&28 (2.7)\\
\hline
\bf BP2 \%&11.5 (.31)&11.5 (.55)&11.5 (1.1)&-1.2 (.02)&-1.2 (.02)&-1.2 (.0)&20.2 (.55)&20.2 (1.09)&20.2 (1.98)\\
\hline
\bf BP4 \%&103.3 (2.58)&103.3 (4.89)&103.3 (9.52)&-8.4 (-.19)&-8.4 (-.37)&-8.4 (-.73)&7.4 (.23)&7.4 (.45)&7.4 (.78)\\
\hline
\bf MAP1 \%&-7.1 (-.19)&-7.1 (-.34)&-7.1 (-.66)&13.2 (.41)&13.2 (.75)&13.2 (1.39)&36.7 (.97) &36.7 (1.91)&36.7 (3.5) \\
\hline
\bf MAP2 \%&-6.9 (-.16)&-6.9 (-.33)&-6.9 (-.61)&5.1 (.2)&5.1 (.34)&5.1 (.61)&28 (.75) &28 (1.48)&28 (2.7)  \\
\hline
\bf MAP4 \%&48.3 (1.22)&48.3 (2.29)&48.3 (4.48)& -5.7 (-.1)&-5.7 (-.22)&-5.7 (-.44)&13.3 (.38) &13.3 (.75)&13.3 (1.34)\\
\hline\end{tabular}}
 \end{table}

\section{Bayesian inference for the Ornstein-Uhlenbeck process} \label{s6}
Consider a stationary version of the Ornstein-Uhlenbeck process (O.U.) defined by
$X_t = m +\int_{-\infty}^t e^{-\tht(t-s)} \di W(s)$, $t\in\reel$, $(m\in\reel, \;\tht>0)$
where $W$ is a standard bilateral Wiener process. Set $X_{0,t} = X_t - m$, $t\in\reel$, then the likelihood of $X_{(S)} = (X_t,\, 0\le t \le S)$ with respect to $X_{0,(S)}=(X_{0,t},\, 0\le t \le S)$ is given by
\begin{equation} \label{e61}
L\big( X_{(S)};m,\tht) = \exp\Big(- \frac{\tht m^2}{2} (2+\tht S) + \tht m (X_0+X_S + \tht \int_0^S X_t \dt)\Big)
\end{equation}
\citep[cf.][p. 128-129]{Gr81}  where $X_{(S)}$ and $X_{0,(S)}$ take their values in the space $C([0,S])$, $(S>0)$.

\subsection{Estimating $m$}

We suppose that $\tht$ is known and $m\in\reel$ is unknown. In order to construct a Bayesian estimator of $m$ and a Bayesian predictor of $X_{S+h}$ ($h>0$) given $X_{(S)}$, we consider the random variable $\mathbb{M}$ with prior distribution ${\cal N}(m_0,u^2)$ ($u>0$), and suppose that $\mathbb{M}$ is independent from $W$. Using \eqref{e61}, it follows that the posterior density of $\mathbb{M}$ given $X_{(S)}$ is $\dsp {\cal N}\big(\frac{B}{A}, \frac{1}{A}\big)$ where $A= \tht (2+\tht S) + \frac{1}{u^2}$ and $B= \tht Z_S + \frac{m_0}{u^2} \;\;\;\text{ with } Z_S =  \big( X_0 + X_S + \tht \int_0^S X_t \dt\big)$. Hence the Bayesian estimator of $m$:
$$
\widehat{m}_S = \frac{B}{A} = \frac{Z_S +m_0 \tht^{-1} u^{-2}}{2+ \tht S + \tht^{-1}u^{-2}}$$
when the maximum likelihood estimator (MLE) is $m_S = \frac{Z_S}{2+\tht S}$. Consequently
\begin{equation}
\label{e62}
\widehat{m}_S= \alpha_S\,m_S+ (1- \alpha_S) m_0
\end{equation}
with $\alpha_S = (1+ \tht^{-1} (2+\tht S)^{-1} u^{-2})^{-1} \in ]0,1[$. Note that $\dsp \lim_{u\to 0} \widehat{m}_S = m_0$ and $\dsp\lim_{u\to\infty} \widehat{m}_S = m_S$.

\paragraph{Asymptotic efficiency}
The MLE $m_S$ is efficient \citep[cf.][p. 28]{BB07}  and $\widehat{m}_S$ is asymptotically efficient since, from \eqref{e62},
$$\frac{\esp_m(\widehat{m}_S - m)^2}{\esp_m (m_S - m)^2} = \alpha_S^2+ (1-\alpha_S)^2 \frac{(m_0-m)^2}{\esp_m(m_S - m)^2}$$
with $\alpha_S^2\to 1$ as $S\to\infty$, $(1-\alpha_S)^2 = {\cal O}(S^{-2})$ and $\esp_m(m_S - m)^2 = {\cal O}(S^{-1})$.

\paragraph{Prediction}
We have
$\esp_m(X_{S+h} | X_{(S)}) = \esp_m(X_{S+h}| X_S) = e^{-\tht h} (X_S- m) + m$.  The unbiased predictor associated with the MLE is
$$p_S := p(X_{(S)}) = m_S ( 1- e^{-\tht h}) + e^{-\tht h} X_S,$$
and by Proposition~\ref{p2}, one obtains the Bayesian predictor
$$\widehat{p}_{0,S}:= p_0(X_{(S)}) = \widehat{m}_S ( 1- e^{-\tht h}) + e^{-\tht h} X_S.$$
We get
$$
\widehat{p}_{0,S} = \alpha_S\, p_S + (1- \alpha_S)\big( m_0(1 - e^{-\tht h})+ e^{-\tht h} X_S\big)= \alpha_S\,  p_S + (1- \alpha_S)p(X_S,m_0).
$$

Concerning efficiency, again we deduce that $p_S$ is efficient and $\widehat{p}_{0,S}$ is asymptotically efficient. Now, in order to compare $\widehat{p}_{0,S}$ with $p_S$, we use Lemma~\ref{l4b} for obtaining the following result.

\begin{proposition}\label{p4b} We have $$\widehat{p}_{0,S} \prec p_S \Longleftrightarrow\abs{m-m_0} \le \Big( \frac{1}{\tht(2+\tht S)} + 2u^2\Big)^{\frac{1}{2}}$$
and $\dsp \abs{m- m_0} \le u\sqrt{2}$  implies  $\widehat{p}_{0,S} \prec p_S $  for all  $S>0$.
\end{proposition}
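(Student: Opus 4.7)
The plan is to apply Lemma~\ref{l4b} directly, with the parameter $m$ playing the role of $\theta$. The decomposition $\esp_m(X_{S+h}|X_{(S)}) = e^{-\tht h}X_S + (1-e^{-\tht h})\cdot m$ is of the form $A(X) + d\cdot m$ with $A(X_{(S)})=e^{-\tht h}X_S$ and $d = 1-e^{-\tht h}\neq 0$. The unbiased predictor $p_S$ corresponds to $\overline{m}(X) = m_S$, and the decomposition $\widehat{p}_{0,S}= \alpha_S p_S + (1-\alpha_S)p(X_S,m_0)$ derived in the text matches the ``Bayesian type'' predictor of the lemma with $\tht_0 = m_0$ and $\alpha = \alpha_S$. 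Thus Lemma~\ref{l4b} yields immediately
$$\widehat{p}_{0,S}\prec p_S \Longleftrightarrow \abs{m-m_0} \le \Big(\frac{1+\alpha_S}{1-\alpha_S}\Big)^{1/2}\Big(\esp_m(m_S-m)^2\Big)^{1/2}.$$

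The bulk of the work is then just to identify the right-hand side with $\big(\frac{1}{\tht(2+\tht S)}+2u^2\big)^{1/2}$. First I would compute $\esp_m(m_S-m)^2$: from the log-likelihood \eqref{e61}, the Fisher information is $I(m) = \tht(2+\tht S)$, and since the MLE $m_S$ is efficient (as noted just above in the excerpt), $\esp_m(m_S-m)^2 = 1/(\tht(2+\tht S))$. Next, setting $c := \tht^{-1}(2+\tht S)^{-1}u^{-2}$ so that $\alpha_S = 1/(1+c)$, a direct computation gives
$$\frac{1+\alpha_S}{1-\alpha_S} = \frac{2+c}{c} = 1 + 2\tht(2+\tht S)u^2.$$
Multiplying by $\esp_m(m_S-m)^2 = 1/(\tht(2+\tht S))$ and taking the square root produces exactly $\big(\frac{1}{\tht(2+\tht S)}+2u^2\big)^{1/2}$, yielding the equivalence.

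For the second assertion, it suffices to observe that $\frac{1}{\tht(2+\tht S)}+2u^2 \ge 2u^2$ for every $S>0$, so $\abs{m-m_0}\le u\sqrt{2}$ forces the inequality in the equivalence and hence $\widehat{p}_{0,S}\prec p_S$ uniformly in $S$.

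There is no real obstacle here — once Lemma~\ref{l4b} is in place the proof reduces to two bookkeeping computations (the variance of $m_S$ and the algebra for $(1+\alpha_S)/(1-\alpha_S)$). The only point requiring a small care is verifying that the prefactor $d = 1-e^{-\tht h}$ appears symmetrically on both sides of the equivalence and therefore cancels out, so that the final bound does not depend on $h$.
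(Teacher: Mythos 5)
Your proof is correct and follows exactly the route the paper takes: the paper's own proof is the single remark that the result is ``straightforward'' from Lemma~\ref{l4b} once one knows $\esp_m(m_S-m)^2=\big(\tht(2+\tht S)\big)^{-1}$, and your identification of $A(X)$, $d$, $\alpha_S$, $\tht_0=m_0$ and the algebra giving $\frac{1+\alpha_S}{1-\alpha_S}\cdot\esp_m(m_S-m)^2=\frac{1}{\tht(2+\tht S)}+2u^2$ is precisely that computation, carried out explicitly.
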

The proof is straightforward since one has $\esp_m(m_S -m)^2 = \big(\tht (2+\tht S)\big)^{-1}$. Of course, the result is strictly the same if one compares $\widehat{m}_S$ with $m_S$ since $\widehat{m}_S \prec  m_S$ is equivalent to $\widehat{p}_{0,S} \prec p_S$.

\subsection{Estimating $\tht$}

Suppose now that $\tht$ is unknown and $m$ is known~;~one may take $m=0$. The likelihood of $X_{(S)}$ with respect to $W_{(S)}$ has the form
\begin{equation*}
L(X_{(S)}) = \exp\Big(-\frac{1}{2} (X_S^2- X_0^2 -S) - \frac{\tht^2}{2}\int_0^S X^2_t \dt\Big),
\end{equation*}
see \citet{LS01}. Even if $\tht$ is positive, it is convenient to take ${\cal N}(\tht_0,v^2)$ (with $\tht_0 >0$ and $v^2>0$) as prior distribution of $\mathbb{T}$. Then, the marginal distribution of $X_{(S)}$ has density
$\vp(x_{(S)}) =\frac{1}{\sqrt{\alpha v^2}} \exp\Big( - \frac{\tht_0^2}{2v^2} + \frac{\beta^2}{2\alpha}\Big)$
where $\alpha =\int_0^S x_s^2 \ds+ \frac{1}{v^2}$ and $\beta = \frac{S- x_S^2+x_0^2}{2} + \frac{\tht_0}{v^2}$.

It follows that the conditional distribution of $\mathbb{T}$ given $X_{(S)}$ is ${\cal N}\big( \frac{\beta}{\alpha}, \frac{1}{\alpha}\big)$, hence the Bayesian estimator of $\tht$:
$
\widehat{\tht}_S = \frac{\beta}{\alpha}= \frac{\frac{1}{2}(S- X_S^2+X_0^2)+ \tht_0 v^{-2}}{\int_0^S X_t^2 \dt + v^{-2}}$
when the MLE is $\theta_S = \frac{\frac{1}{2}(S - X_S^2+X_0^2)}{\int_0^S X_t^2 \dt}$,
consequently
\begin{equation}\label{e63}
\widehat{\tht}_S = \gm_S\, \tht_S + (1-\gm_S)\tht_0 \;\;\text{ with }\;\; \gm_S = \frac{\int_0^S X_t^2\dt}{\int_0^S X_t^2 \dt +v^{-2}},
\end{equation}
and $\dsp \lim_{v^2\to 0} \widehat{\tht}_S= \tht_0$ while $\dsp \lim_{v^2\to\infty} \widehat{\tht}_S = \tht_S$.

Concerning prediction, we have $\dsp \esp_{\tht} (X_{S+h} |X_{(S)}) = e^{-\tht h} \cdot X_S$, so it is necessary to compute the Bayesian estimator of $e^{-\tht\,h}$. We get
$$
\esp(e^{-\mathbb{T} h} | X_{(S)} ) =\int_{\reel} e^{-\tht h} \sqrt{\frac{\alpha}{2\pi}} e^{-\frac{\alpha}{2}(\tht -\frac{\beta}{\alpha})^2} \di\tht
= \exp(- \frac{2\beta -h}{2\alpha}\cdot h),
$$
hence the Bayesian predictor $\widehat{p}_0(X_{(S)}) = \exp(-\frac{2\beta - h}{2\alpha} \cdot h)\cdot  X_S$. The predictor associated with the MLE is $p(X_{(S)}) =e^{-\tht_S \cdot h} \cdot X_S$
and finally, an alternative form of the predictor, associated with the MAP, should be
$\widetilde{p}(X_{(S)}) = e^{-\widehat{\tht}_S\cdot h} \cdot X_S$.

\medskip\


Finally, one may consider alternative priors, as well as, the translated exponential distribution with density $\vp(\tht) = \eta \exp\big(-\eta (\tht - \tht_0)\big) \indi_{]\tht_0,+\infty[}(\tht)$, $(\eta>0,\, \tht_0 \ge 0)$. If $\psi$ denotes the density of ${\cal N}(- \frac{a}{2b}, \frac{1}{b})$, with $a = x_S^2 - x_0^2 - S + 2\eta$ and $b= \int_0^S x_t^2\dt$, the Bayesian estimator is  given by
$\widehat{\tht}_S = \int_{\tht_0}^{\infty} \tht \psi(\tht) \di\tht \Big/ \int_{\tht_0}^{\infty} \psi(\tht) \di\tht$ and can be numerically computed.
Derivation is left to the reader.

\section{Ornstein-Uhlenbeck process for sampled data} \label{s7}

We now consider the more realistic case where only $X_0,X_{\de},\dotsc,X_{n\de}$ are observed and one wants to predict $X_{(n+h)\de}$, $(h>0)$.

\subsection{Estimation of $m$}

If $\tht$ is known, and $m\in \reel$ unknown, the associated model is
\begin{align}\label{e71}
X_{n\de} - m &= e^{-\tht\de} \big( X_{(n-1)\de} - m) + \ve_{n\de}, \;\;\; n \in\z
\intertext{and}
\var(\ve_{n\de}) &= \frac{1- e^{-2\tht \de}}{2\tht} =: \sigma^2_{\de,\tht} \label{e72}
\end{align}
If $\de>0$ is fixed, we deal with a classical AR(1), so we will focus on the case  where $\de=\dn$ is `small'. One may use various conditions as $n\to \infty$: $\dn \to 0$ and $n\dn \to \infty$ or $\dn \to 0$ and $n\dn \to S >0$ for example. Two approaches are possible: either considering the likelihood or the conditional likelihood  ($X_0$ is arbitrary but non random) which has a simpler form.

\subsubsection{Unconditional estimation}\label{sub711}
Since $X_0-m,\ve_{\dn},\dotsc,\ve_{n\dn}\sim {\cal N}(0,(2\tht)^{-1}) \otimes {\cal N}(0, \sigma^2_{\delta_n,\tht})^{\otimes n}$, one may deduce that  $(X_0-m,X_{\delta_n}-m,\dotsc,X_{n\delta_n}-m)$ has the density
\begin{multline*}
f(x_0,x_1,\dotsc,x_n) = \Big(\frac{\tht}{\pi}\Big)^{\frac{1}{2}}  \frac{1}{(\sigma_{\delta_n,\tht}\sqrt{2\pi})^n} \times \exp\Big(-\tht(x_0-m)^2\\-\sum_{i=1}^n \frac{\big(x_i - e^{-\tht\dn} x_{i-1}-  m(1-e^{-\tht\dn})\big)^2}{2\sigma_{\delta_n\,\tht}^2}\Big).
\end{multline*}
This yields
\begin{equation}\label{e73}
m_n = \frac{X_0 + X_{n\dn} + (1- e^{-\tht \dn}) \sum_{i=1}^{n-1} X_{i\dn}}{n(1- e^{-\tht \dn}) + 1+ e^{-\tht \dn}}
\end{equation}
for the MLE, while if $\mathbb{M}\sim {\cal N}(m_0,u^2)$, one has
\begin{multline*}
L(X_0,X_{\dn},\dotsc,X_{n\dn},\mathbb{M})=  \Big(\frac{\tht}{\pi}\Big)^{\frac{1}{2}}  \frac{1}{\sigma_{\delta_n,\tht}\sqrt{2\pi}} \times \exp\Big(-\tht(X_0-\mathbb{M})^2\\-\sum_{i=1}^n \frac{\big(X_{i\dn} - e^{-\tht\dn} X_{(i-1)\dn}-  \mathbb{M}(1-e^{-\tht\dn})\big)^2}{2\sigma_{\delta_n\,\tht}^2}\Big)\times \frac{1}{u\sqrt{2\pi}}\exp\big(-\frac{1}{2u^2}(\mathbb{M}-m_0)^2\big)
\end{multline*}
giving
\begin{equation}\label{e74}
\widehat{m}_n = \frac{X_0 + X_{n\dn} + (1- e^{-\tht \dn}) \sum_{i=1}^{n-1} X_{i\dn}+(1+e^{-\tht\dn})\frac{m_0}{2\tht u^2}}{n(1- e^{-\tht \dn}) + (1+ e^{-\tht \dn})(1+ \frac{1}{2\tht u^2})}.
\end{equation}
Again, we have $\widehat{m}_n =\alpha_n m_n + (1-\alpha_n) m_0$ with
$$\alpha_n = \frac{n(1-e^{-\tht\dn}) + 1+ e^{-\tht \dn}}{n(1-e^{-\tht\dn})  + (1+ e^{-\tht \dn})(1+ (2\tht u^2)^{-1})}.$$
Since $
\esp(X_{(n+h)\dn} | X_{n\dn}) = e^{-\tht h \dn} (X_{n\dn} - m) +m$, the derived predictors of $X_{(n+h)\dn}$, $h \ge 1$ are  given by $p_n(X_{n\dn})= m_n(1-e^{-\tht h\dn}) + e^{-\tht h \dn} X_{n\dn}$ while $\widehat{p}_{0,n}(X_{n\dn}) =  \widehat{m}_n(1-e^{-\tht h\dn}) + e^{-\tht h \dn} X_{n\dn}$,  and Lemma~\ref{l4b} implies that
\begin{multline*}
\widehat{p}_{0,n} \prec p_n \eqv\\ (m- m_0)^2\le  \frac{2n(1-e^{-\tht\dn}) + (1+ e^{-\tht \dn})(2+ (2\tht u^2)^{-1})}{ (1+ e^{-\tht \dn})(2\tht u^2)^{-1}}\times \esp_m(m_n -m)^2.
\end{multline*}
Next, easy but tedious computation gives $\esp_m(m_n - m)^2 =\frac{1+e^{-\tht \dn}}{2\tht \big(n(1-e^{-\tht \dn})+ 1+e^{-\tht \dn}\big)}$ yielding the equivalence: $\widehat{p}_{0,n} \prec p_n \Leftrightarrow (m-m_0)^2\le 2 u^2 + \frac{1+e^{-\tht \dn}}{2\tht \big( 1+e^{-\tht\dn}+n(1- e^{-\tht \dn})\big)}$. Asymptotically, we get if $\dsp \dn\tv[n\to\infty]{} 0$, $\dsp n\dn \tv[n\to\infty]{} S>0$, $\widehat{p}_{0,n}\prec_{n\to\infty} p_n $ is equivalent to  $(m-m_0)^2\le 2u^2+ \frac{1}{\tht(\tht S+2)}$. The condition $S\to\infty$ implying in turn the equivalence $
\widehat{p}_{0,n} \prec_{n\to\infty} p_n \Leftrightarrow (m-m_0)^2\le 2u^2$, which are the same results as in the continuous case (cf. Proposition~\ref{p4b}). If $n\dn\to S>0$, note that our estimators of $m$ are no more consistent~! But still in this case, a good choice of the prior should allow reductions  of risks of estimation and prediction.
\subsubsection{Conditional likelihood}
In this part, we use conditional likelihood on $X_0$, and choosing $\mathbb{M}\sim {\cal N}(m_0,u^2)$, $(u>0)$, we obtain the `density' of $(X_{\dn},\dotsc,X_{n\dn},\mathbb{M})$:
\begin{multline*}
\widetilde{L}(X_{\dn},\dotsc,X_{n\dn},\mathbb{M}) = \frac{1}{( \sigma_{\dn,\tht} \sqrt{2\pi})^n} \exp\bigg( - \frac{1}{2 \sigma^2_{\delta_n,\tht}} \sum_{i=1}^n \Big( (X_{i\dn} - \exp(-\tht \dn) X_{(i-1)\dn}) \\+ \mathbb{M}(\exp(-\tht \dn)-1)\Big)^2\bigg)  \times \frac{1}{u\sqrt{2\pi}} \exp\Big( - \frac{1}{2u^2} (\mathbb{M}- m_0)^2\Big),
\end{multline*}
where $\sigma^2_{\delta_n,\tht}$ is defined by \eqref{e72}. Now:
$$\ln\widetilde{L} = c - \frac{1}{2\sigma^2_{\delta_n,\tht}} \sum_{i=1}^n \Big( X_{i\dn} - \exp(-\tht \dn) X_{(i-1)\dn}+\mathbb{M}(\exp(-\tht \dn) -1)\Big)^2 - \frac{(\mathbb{M}-m_0)^2}{2u^2},$$
where $c$ does not depend on $n$. Since we are in the Gaussian case, the conditional mode and the conditional expectation coincide and it follows that the Bayesian estimator is now given by
\begin{equation}\label{e75}
\widetilde{m}_n = \frac{(1-\exp(-\tht \dn)) \sum_{i=1}^n (X_{i\dn} - \exp(-\tht \dn) X_{(i-1)\de}) + m_0 \frac{\sigma^2_{\delta_n,\tht}}{u^2}}{(1-\exp(-\tht \dn))^2 n + \frac{\sigma^2_{\delta_n,\tht}}{u^2}},
\end{equation}
while the conditional MLE takes the form
\begin{equation}\label{e76}
\breve{m}_n = \frac{\sum_{i=1}^n (X_{i\dn} - \exp(-\tht \dn) X_{(i-1)\de}) }{(1-\exp(-\tht \dn)) n },
\end{equation}
We may slightly modify the estimator \eqref{e75} for obtaining
\begin{equation} \label{e77}
\overline{m}_n = \beta_n \overline{X}_n + (1-\beta_n) m_0
\end{equation}
with $\overline{X}_n = n^{-1}\sum_{i=1}^n X_{i\dn}$ and $\dsp
\beta_n = \frac{(1-\exp(-\tht \dn))^2}{(1-\exp(-\tht \dn))^2+ \frac{\sigma^2_{\delta_n,\tht}}{nu^2}}$. Hence
$$\frac{1+\beta_n}{1-\beta_n} =\frac{4\theta u^2 n (1-e^{-\tht \dn})+1+e^{-\tht\dn}}{1+e^{-\tht \dn}}$$
and, since $\var(\overline{X}_n)= \frac{(1-e^{-2\tht\dn})+ \frac{2}{n}e^{-\tht\dn}(e^{-\tht n\dn }-1)}{2n\tht(1-e^{-\tht\dn})^2}$,  asymptotically we get that, if  $\dn\to 0,n\dn\to S>0$,
$$
\overline{m}_n \prec_{n\to\infty} \overline{X}_n \eqv (m-m_0)^2 \le \frac{(1+2u^2\tht^2S)(\tht S - 1+e^{-\tht S})}{\tht^3 S^2}$$
while if  $\dn\to 0,n\dn\to \infty$, we get the equivalence: $\overline{m}_n \prec_{n\to \infty} \overline{X}_n \Leftrightarrow (m-m_0)^2 \le 2u^2$. Again, the same results are obtained for predictors.
\subsection{Estimation of $\rho$}
In the case where $m$ is known (one may set $m=0$), we now choose ${N}(\rho_0,v^2)$ as a prior for $\rho = e^{-\tht \dn}$, with $0<\rho_0<1$ and $v>0$. Note that this prior is reasonable as soon as $\rho_0$ is not too far from 1 and $v$  not too large. Using again the conditional likelihood, one obtains the expression:
\begin{multline*}
\widetilde{L}(X_{\dn},\dotsc,X_{n\dn},\rho) =\frac{1}{(\sigma_{\dn,\tht}\sqrt{2\pi})^n}\exp\Big(-\frac{1}{2\sigma^2_{\delta_n,\tht} } \sum_{i=1}^n \big( X_{i\dn} - \rho X_{(i-1)\dn}\big)^2 \Big) \\ \times \frac{1}{v\sqrt{2\pi}} \exp\big( - \frac{1}{2v^2} (\rho - \rho_0)^2\big).
\end{multline*}
Since $\sigma_{\tht,\de}^2$ depends on $\rho$, we make the approximation $\sigma_{\de,\tht}\sim \de$ for obtaining the posterior distribution ${\cal N} (\frac{B}{A},\frac{1}{A})$ where $\dsp A= \frac{1}{\dn} \sum_{i=1}^n X_{(i-1)\dn}^2 + \frac{1}{v^2}$ and $\dsp B= \frac{1}{\dn} \sum_{i=1}^nX_{(i-1)\dn}X_{i\dn}+ \frac{\rho_0}{v^2}$, hence the `Bayesian' estimator takes the form
\begin{equation}\label{e78} \widetilde{\rho}_n = \frac{\sum_{i=1}^n X_{(i-1)\dn}X_{i\dn} + \frac{\rho_0 \dn}{v^2}}{\sum_{i=1}^n X_{(i-1)\dn}^2 + \frac{\dn}{v^2}}.
\end{equation}
Comparison with the conditional MLE  \begin{equation}\label{e79}
\widehat{\rho}_n = \frac{\sum_{i=1}^n X_{(i-1)\dn}X_{i\dn} }{\sum_{i=1}^n X_{(i-1)\dn}^2}
\end{equation}
is rather intricate and will be illustrated numerically in the next section.

\subsection{Simulation}

\begin{table}[b]
\caption{$L^2$-prediction error  ($m$ unknown) for MLE predictor and percentage variation of $L^2$-prediction error for others in the case where  $\tht=1$, $H=1$, $u^2=1$ and $\de=0.1$.}\label{tab3}
\renewcommand{\arraystretch}{1.2}
\scalebox{0.95}{\begin{tabular}{|c||c|c|c||c|c|c||}
\hline
&\multicolumn{3}{|c||}{\bf n=15}&\multicolumn{3}{|c||}{\bf n=30}\\
\hline
\bf MLE &\multicolumn{3}{|c||}{\bf 0.548}&\multicolumn{3}{|c||}{\bf 0.499}\\
\hline
\bf Mean (\%) &\multicolumn{3}{|c||}{2.76}&\multicolumn{3}{|c||}{2.82}\\ \hline
\bf CMLE  (\%)  &\multicolumn{3}{|c||}{27.97}&\multicolumn{3}{|c||}{10.54} \\ \hline
&$\mathbf{m_0=4}$&$\mathbf{m_0=5}$&$\mathbf{m_0=7}$&$\mathbf{m_0=4}$&$\mathbf{m_0=5}$&
$\mathbf{m_0=7}$ \\ \hline
\bf Bay  (\%)  & -4.87 &-8.52 &5.77& -2.79& -4.72 &4.78 \\ \hline
\bf CMAP2  (\%)  &-1.04 &-12.86 &33.54&-.53 &-5.02&16.05 \\ \hline\hline
&\multicolumn{3}{|c||}{\bf n=50}&\multicolumn{3}{|c||}{\bf n=100}\\
\hline
\bf MLE &\multicolumn{3}{|c||}{\bf 0.488}&\multicolumn{3}{|c||}{\bf 0.464}
\\\hline
\bf Mean (\%) &\multicolumn{3}{|c||}{1.61}&\multicolumn{3}{|c||}{0.35}
\\\hline
\bf CMLE  (\%) & \multicolumn{3}{|c||}{5.62}&\multicolumn{3}{|c||}{1.35}
\\\hline
&$\mathbf{m_0=4}$&$\mathbf{m_0=5}$&$\mathbf{m_0=7}$&$\mathbf{m_0=4}$&$\mathbf{m_0=5}$&$\mathbf{m_0=7}$\\\hline
\bf Bay  (\%)  &-1.30 & -2.63 & 2.39&-0.62 &-1.08 &1.08\\
\hline
\bf CMAP2  (\%) & 0.03& -2.28  &6.77& -0.35&  -0.99  &2.00  \\\hline
\end{tabular}}\end{table}

For $\tht\in\{0.5,1,2\}$, $m=5$, various  sample sizes $n$ and values of $\de$, 5000 replications of Ornstein-Uhlenbeck sample paths  are computed from the autoregressive relation \eqref{e71}. First, for known $\tht$ but $m$  unknown,  we compare various predictors of $X_{n\de + H}$, $H= h\de$ and $H=0.5$, 1 or 2, defined by $\mathfrak{m} (1- e^{-\tht h\de})+e^{-\tht h \de} X_{n\de}$ where $\mathfrak{m}$ refers to estimators which are either:
 \begin{itemize}
 \item non Bayesian: MLE with $m_n$ defined in \eqref{e73}, Mean $\overline{X}_n$, CMLE  with $\breve{m}_n$ defined in \eqref{e76},
 \item or Bayesian: Bayes with $\widehat{m}_n$ defined in \eqref{e74}, CMAP1 with $\widetilde{m}_n$ defined in \eqref{e75} ($u^2=1$) and CMAP2 with $\overline{m}_n$ defined in \eqref{e77} ($u^2=1$).
 \end{itemize}

\begin{figure}[t]\label{fig2}
\includegraphics[height=6.2cm]{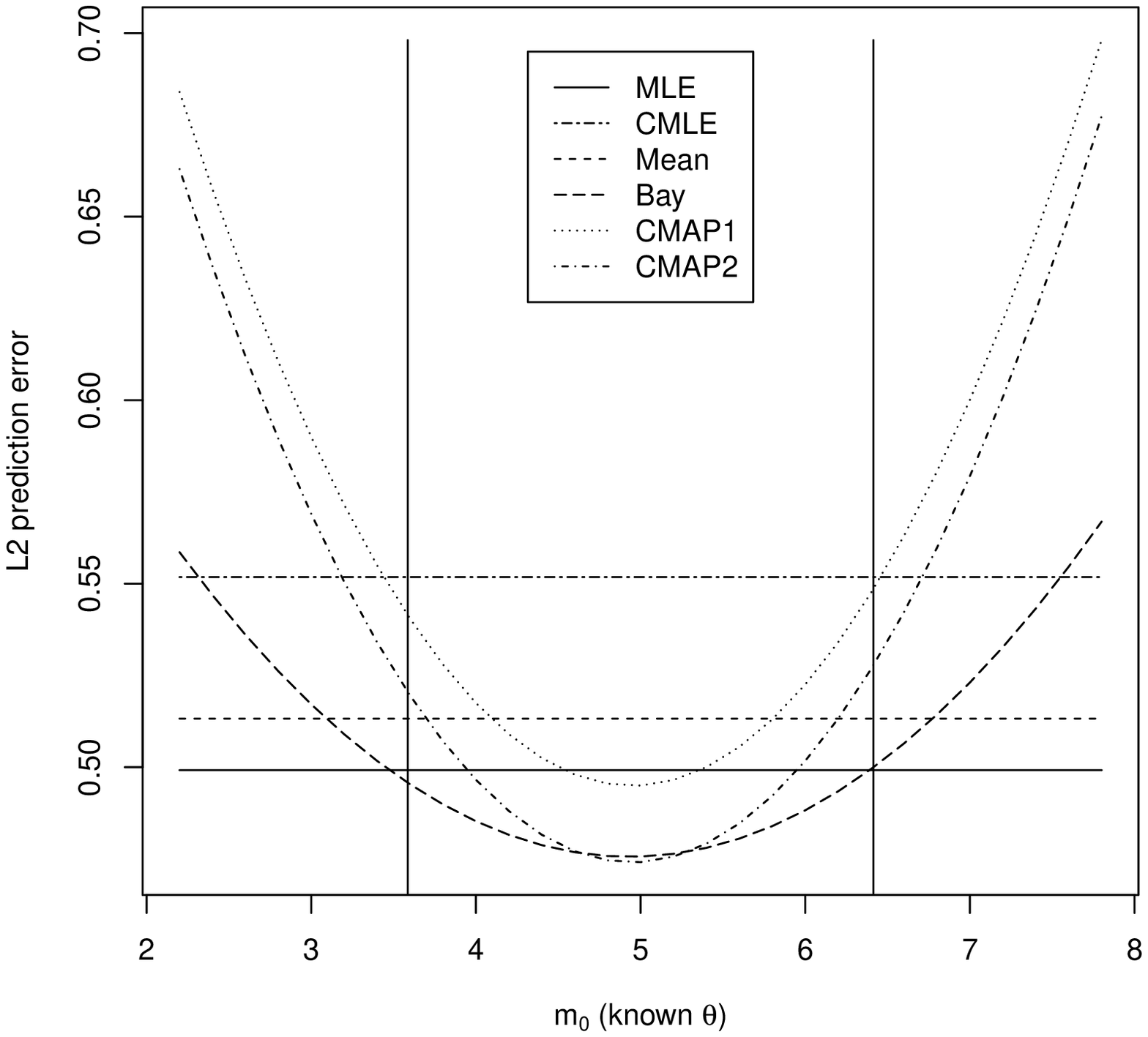}
\includegraphics[height=6.2cm]{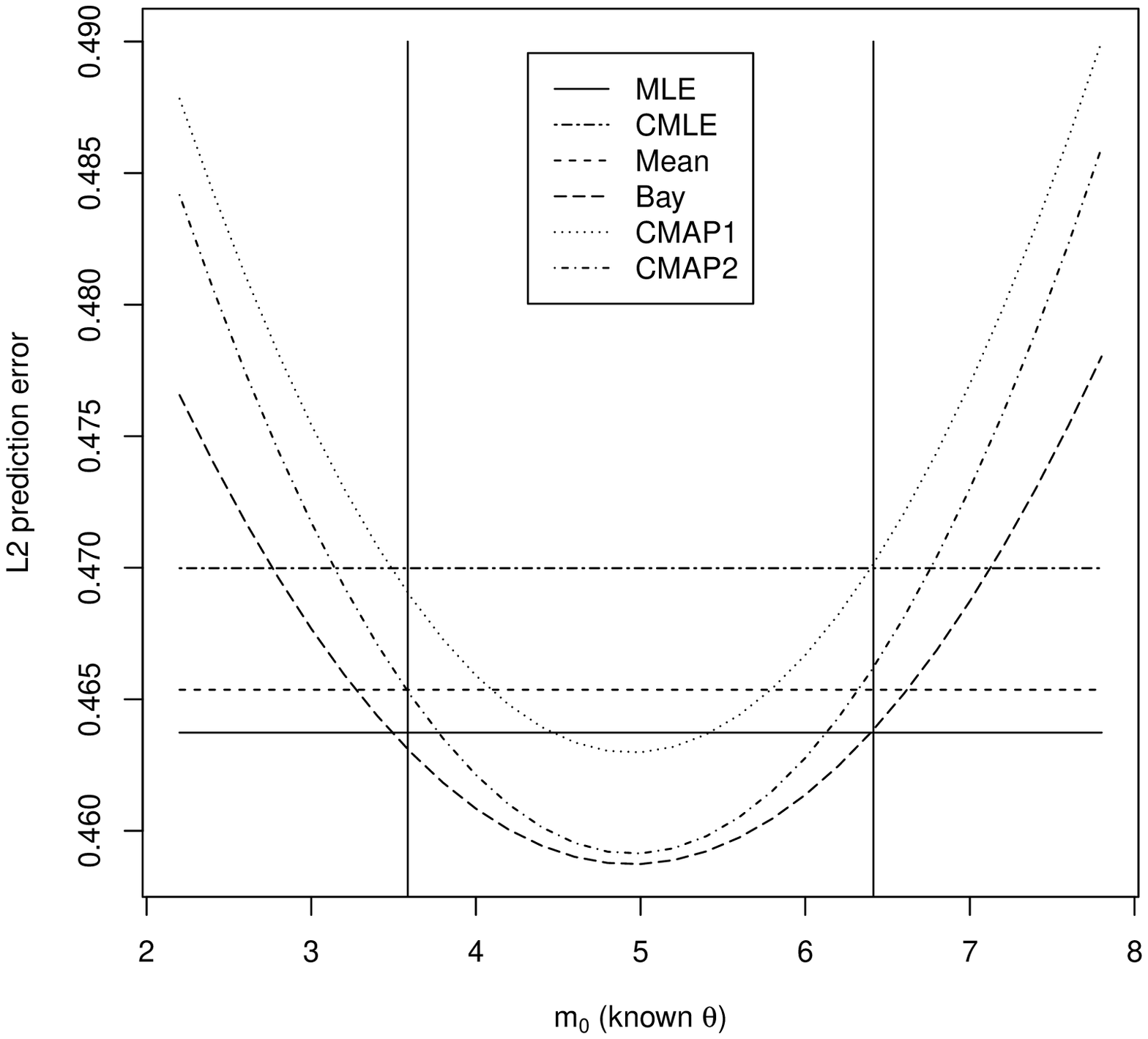}
\caption{{\small $L^2$-prediction error for $m$ unknown ($m=5$) and $\tht=1$ (known), $\de=0.1$  in terms of $m_0$ with ${\cal N}(m_0,1)$ prior: MLE (plain), CMLE (twodash), Mean (dashed), Bayes (longdash), CMAP1 (dotted), CMAP2 (dotdash) when $u^2=1$. Vertical lines corresponds to $m_0= 5\pm \sqrt{2u^2}$. On the left : $n=30 \;(S=3,S+h*\de=4)$, on the right: $n=100 \;(S=10,S+h*\de=11)$. } }
\end{figure}

\begin{table}[b]
\caption{$L^2$-prediction error ($m$ unknown) for MLE predictor and percentage variation of $L^2$-prediction error for others in the case where  $\tht=1$,  $H=1$, $u^2=1$ and $m_0\in\{4,5,7\}$.}\label{tab4}
\renewcommand{\arraystretch}{1.2}
\scalebox{0.72}{\begin{tabular}{|c||c|c|c||c|c|c||c|c|c||c|c|c|}
\hline
\bf n &\multicolumn{3}{|c||}{\bf 10}&\multicolumn{3}{|c||}{\bf 20}&\multicolumn{3}{|c||}{\bf 50}&\multicolumn{3}{|c|}{\bf 100}\\
\hline
$\mathbf{\de}$ & \bf 0.1 & \bf 0.2 & \bf 0.5 & \bf 0.1 & \bf 0.2 & \bf 0.5 & \bf 0.1 & \bf 0.2 & \bf 0.5 & \bf 0.1 & \bf 0.2 & \bf 0.5\\
\hline\hline
\bf MLE& .586 &\bf .531& \bf .488 &\bf .531 & .498 &\bf .464& \bf.488& \bf  .464 &.441&\bf  .464 &.461& .444
\\ \hline
\bf Mean (\%) &2.77& 2.46 & 1.92& 2.40& 2.13 & .44& 1.61 &.36 & .16& .35 &.16 &-.06
\\ \hline
\bf CMLE (\%)& 47.01 & 17.69 & 5.73& 17.56&  7.38 & 1.40&  5.62&  1.36 & 0.24& 1.35 & .47 &-.05
\\ \hline
\bf Bay\textunderscore 4 (\%)&  -5.06& -4.23& -1.31& -4.22 &-1.84& -.65& -1.30 &-.62& -.03&-.62 &-.31 &-.04
\\ \hline
\bf Bay\textunderscore 5 (\%)& -10.48& -6.66 &-2.68& -6.65 &-3.41& -1.11&  -2.63 &-1.08& -0.23 & -1.08 &-0.37& -0.05
\\  \hline
\bf Bay\textunderscore 7(\%)&  4.26 &6.58 &2.47& 6.57& 3.32& 1.11& 2.39& 1.08& 0.11& 1.08 &0.49 &0.12
\\  \hline
\bf CMAP2\textunderscore 4 (\%)&  1.89& -1.70&  .26& -1.70 &-.13& -.30 &.03 &-.34 & .13& -.35 &-.18 &-.10
\\\hline
\bf CMAP2\textunderscore 5 (\%)& -17.35 &-9.23 &-2.13&  -9.23 &-3.20& -.96&  -2.28 &-.98 &-.09& -.99 &-.26 &-.12
\\ \hline
\bf CMAP2\textunderscore 7 (\%)&  46.63& 26.12& 7.21 &25.96 &10.04 &2.15 & 6.77 & 2.02 &.31 & 2.00 & .75 &.07
\\
\hline
\end{tabular}}\end{table}

Among all non Bayesian estimators and in all cases, it emerges that MLE outperforms the other two, with a very poor behaviour of the CMLE toward the others, a fact already noticed by \citet{Co91}. For this reason, our following results do not report the obtained values for CMAP1, because of its too bad behaviour governed by the CMLE. In Table~\ref{tab3}, we give the rounded empirical $L^2$-prediction error of the MLE, and for comparison, the percentage variations observed for the others predictors, in the case of $\tht=1$ and $\de=0.1$. It appears that all errors decrease as $n$ increases, and Bayes predictors are highly competitive for small sample sizes and good choice of priors, namely $\mathbb{M}\sim{\cal N}(m_0,1)$, with $m_0\in \Big] 5- \sqrt{2 + (S+2)^{-1}}, 5+ \sqrt{2 + (S+2)^{-1}}\Big[$ or asymptotically, $S=n\de \to\infty$,  $m_0\in \Big] 5- \sqrt{2}, 5+ \sqrt{2}\Big[$, see Section~\ref{sub711}. By this way, errors are significantly reduced for $m_0=4$ or $5$ and $n$ less than 50, while a bad choice like $m_0=7$ damages them dramatically. It appears also that CMAP2 has the smallest errors but only on a small area around $m$, the Bayesian predictor (with $\widehat{m}_n$ defined in \eqref{e74}) being more robust against the choice of $m_0$. These results are confirmed in Figure~7.1 where errors are given in term of $m_0$: as expected, we obtain parabolic curves for Bayesian predictors. Again, the Bayesian setting improves the errors for good choices of prior (especially for small values of $\de$ and $n\de$ where MLE is not so good) and otherwise deteriorates it.

\begin{figure}[t]\label{fig3}
\includegraphics[height=6.2cm]{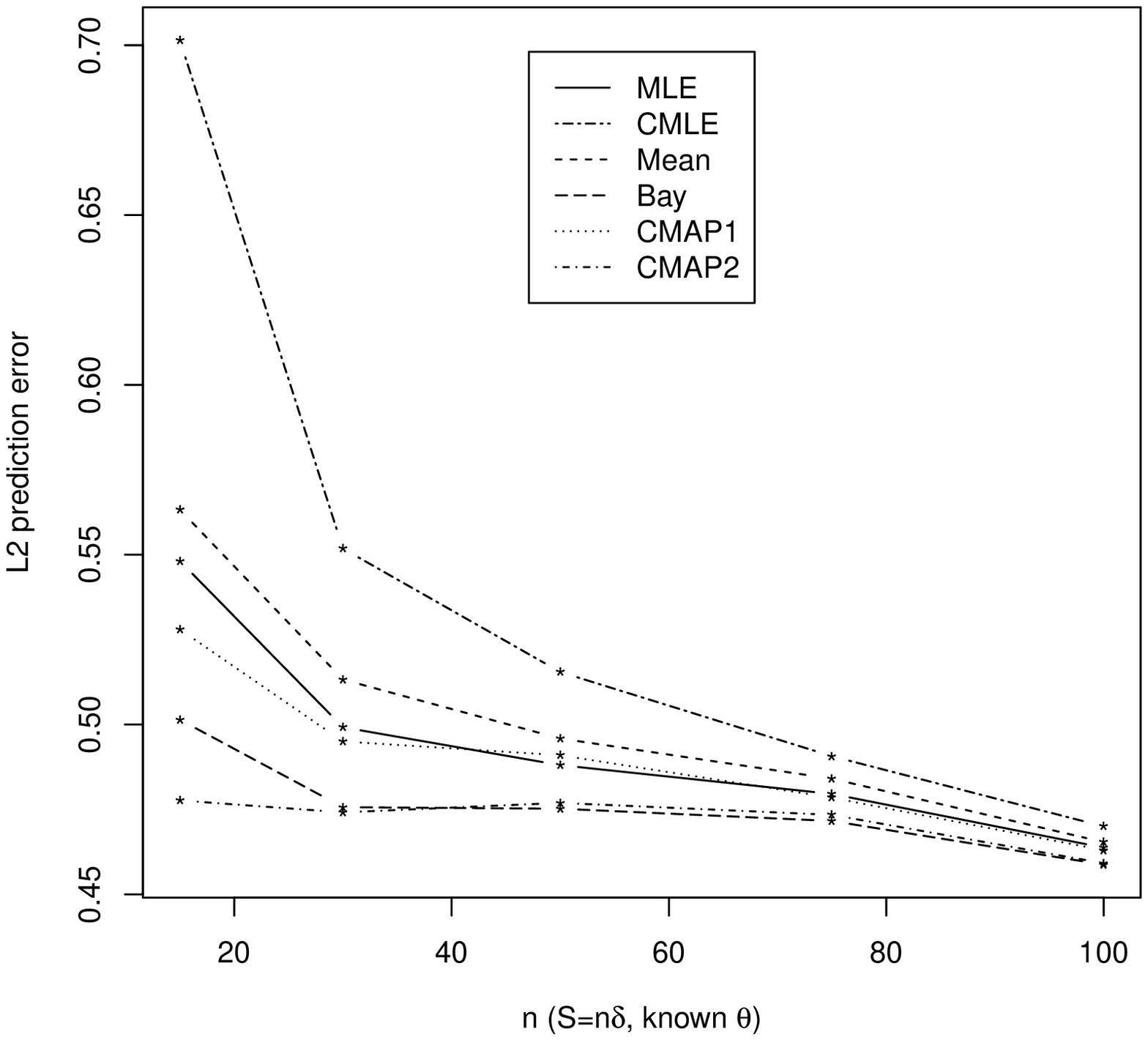}
\includegraphics[height=6.2cm]{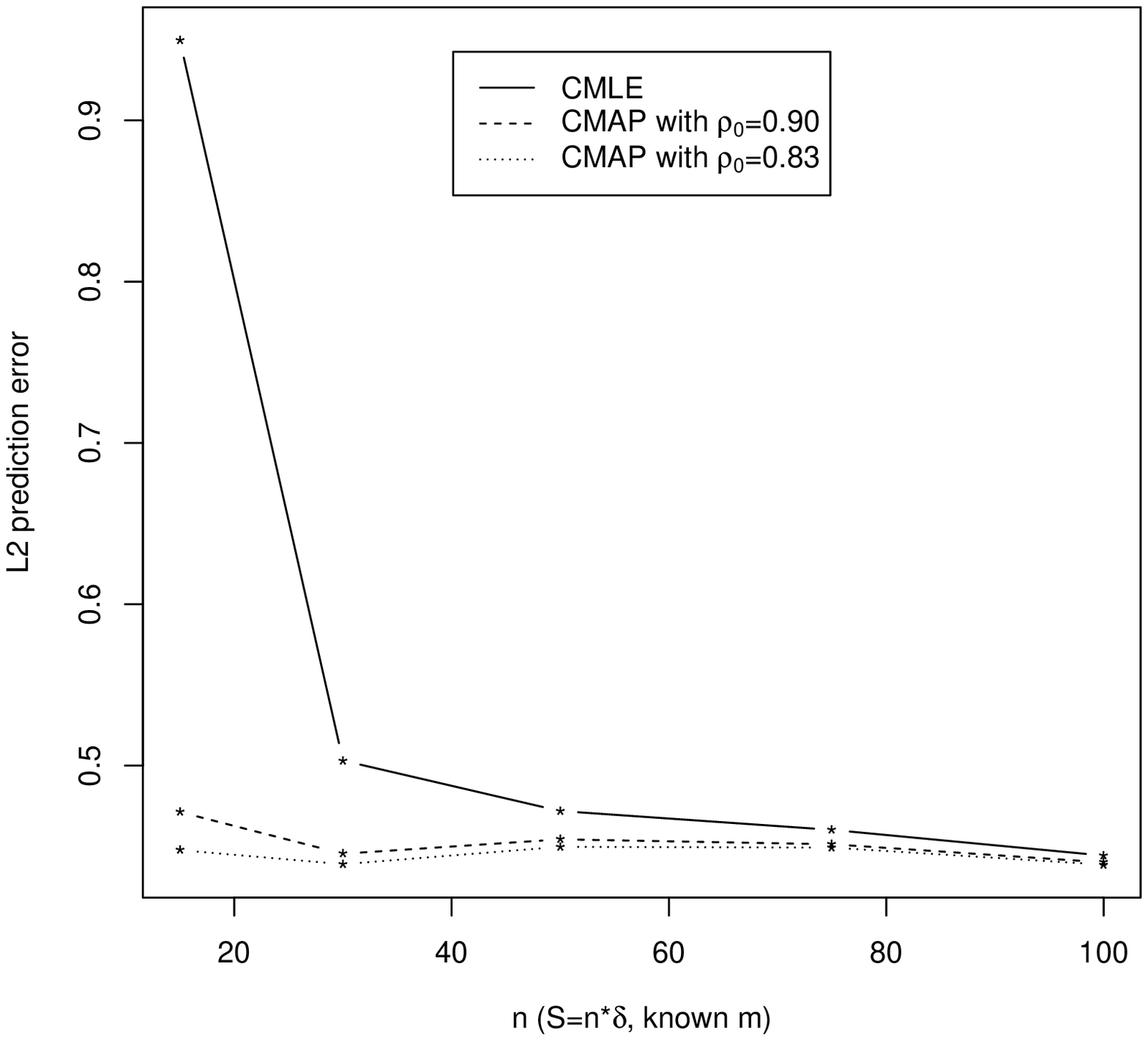}
\caption{{\small On the left: $L^2$ prediction error for unknown $m$, with prior ${\cal N}(5,1)$, known $\tht$ ($\tht=1$) in terms of $n$ when $\de=0.1$:MLE (plain), CMLE (twodash), Mean (dashed), Bayes (longdash), CMAP1 (dotted), CMAP2 (dotdash). On the right: $L^2$ prediction error for unknown $\rho$, prior ${\cal N}(\rho_0,10^{-2})$ in terms of $n$ when $\de =0.1$: CMLE (plain), CBayes with $\rho_0=0.9$ (dashed), CBayes (dotted) with $\rho_0=0.83$.}}
\end{figure}

\begin{table}[h]
\caption{$L^2$-prediction error ($m$ unknown) for MLE predictor and percentage variation of $L^2$-prediction error for other in the case where  $n=20$, $\de=0.1$, $u^2=1$, and $m_0\in\{4,5,7\}$.}\label{tab5}
\renewcommand{\arraystretch}{1.2}
\scalebox{0.82}{\begin{tabular}{|c||c|c|c||c|c|c||c|c|c|}
\hline
$\mathbf \tht$  &\multicolumn{3}{|c||}{$\mathbf{0.5}$}&\multicolumn{3}{|c||}{$\mathbf{1}$}&\multicolumn{3}{|c|}{$\mathbf{2}$}\\
\hline
\bf H & \bf 0.5 & \bf 1 & \bf 2 & \bf 0.5 & \bf 1 & \bf 2 & \bf 0.5 & \bf 1 & \bf 2 \\
\hline\hline
\bf MLE&  \bf 0.421 & \bf 0.728 &\bf  1.138& \bf .34 & \bf .531 &\bf .677 & \bf  .249  & \bf .303  &\bf  .333
\\ \hline
\bf Mean (\%) & 0.51 & 1.32 & 2.23 & 1.81 & 2.4 & 4.07 & 1.88 & 2.72 & 2.85
\\ \hline
\bf CMLE (\%)&   15.03 &27.59& 48.31& 10.76& 17.56 & 27.37&  6.04 &9.76& 10.65
\\ \hline
\bf Bay\textunderscore 4 (\%)&  -3.41& -5.9 &-9.32& -2.72 & -4.22& -6.09 &  -1.04 &-0.92& -2.15
\\ \hline
\bf Bay\textunderscore 5 (\%)&  -5.27 &-9.31 & -15.56 & -4.11 &-6.65 &-9.82&-1.99 &-2.97 &-3.58
\\  \hline
\bf Bay\textunderscore 7(\%)&  2.18& 4.31 &5.68&4.04&6.57&9.24&1.82 & 1.7 &3.87
\\  \hline
\bf CMAP2\textunderscore 4 (\%)&-2.2 &-3.33& -4.77&-1.07 & -1.7&-1.96&0.67 &1.85& 0.33
\\\hline
\bf CMAP2\textunderscore 5 (\%)& -7.41 &-12.86& -21.55&-5.5&-9.23&-13.28& -1.35& -2.1 &-2.9
\\ \hline
\bf CMAP2\textunderscore 7 (\%)&  13.2& 24.79 &38.55&16.06&25.96&37.82& 6.56& 8.36 &12.21
\\
\hline
\end{tabular}}\end{table}

\begin{figure}[t]\label{fig4}
\includegraphics[height=6.2cm]{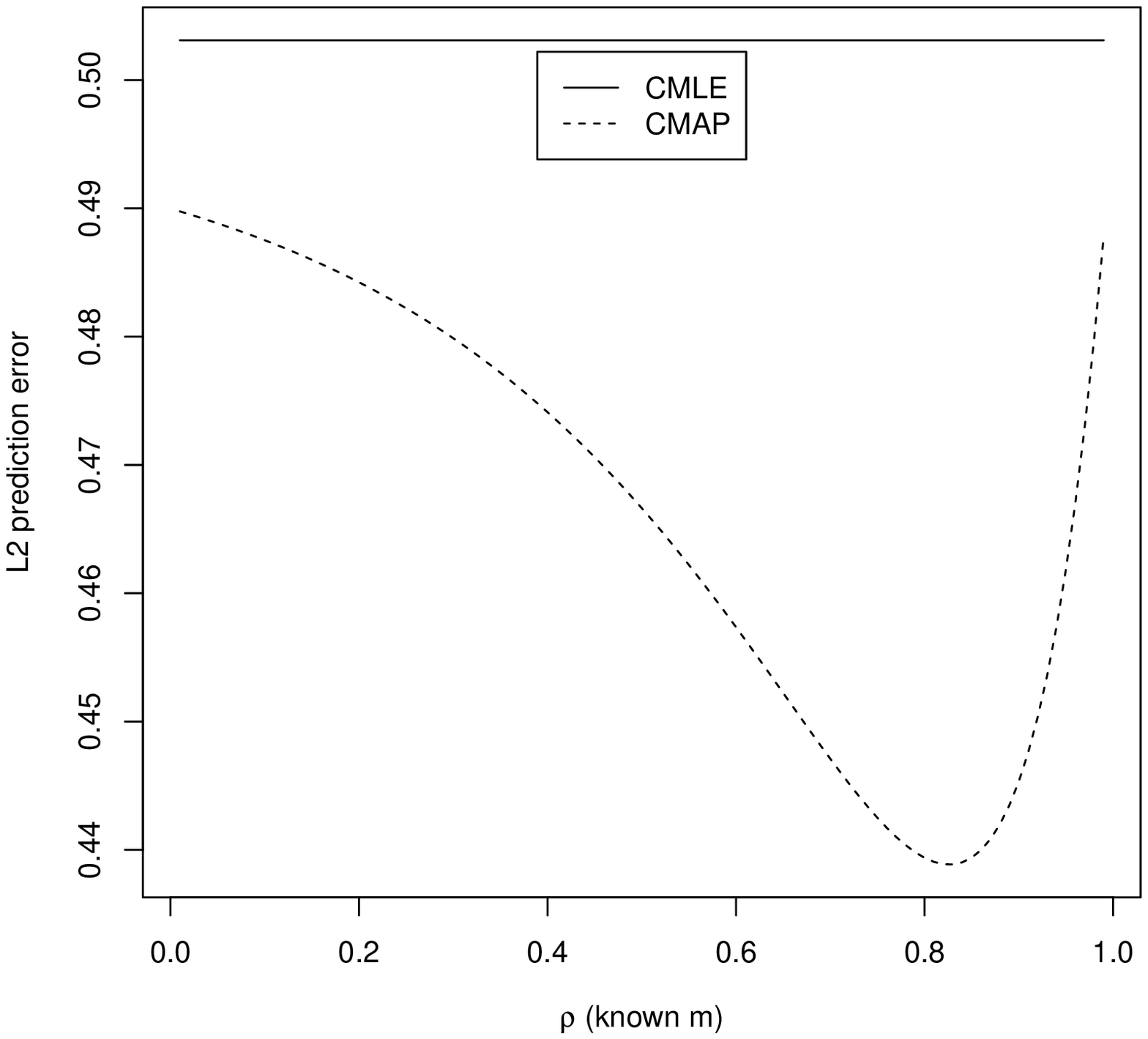}
\includegraphics[height=6.2cm]{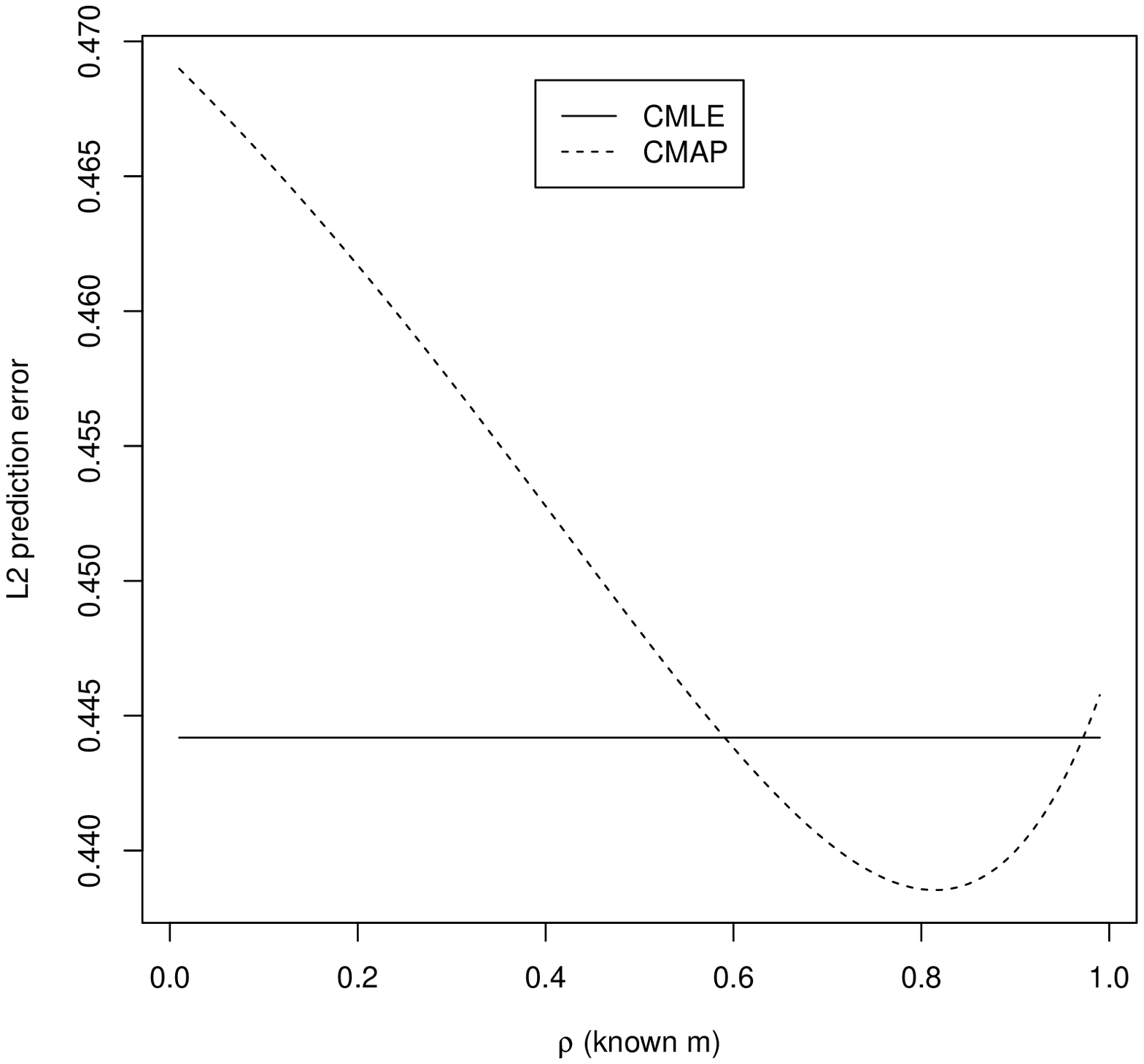}
\caption{{\small $L^2$ prediction error for $\rho=e^{-\tht\de}$ unknown ($\tht=1$, $\de=0.1$) and  $m=5$ (known) in terms of $\rho_0$ with ${\cal N}(\rho_0,10^{-2})$ prior: CMLE (plain horizontal), Bayes predictor (dashed). On the left : $n=30 \;(S=3,S+h*\de=4)$, on the right: $n=100 \;(S=10,S+h*\de=11)$.  }}
\end{figure}

\begin{table}[t]
\caption{$L^2$-prediction error ($\tht$ unknown) for MLE predictor and percentage variation of $L^2$-prediction error for others in the case where  $H=1$, $\de=0.1$, $v^2=0.01$, and $\rho_0\in\{0.5,0.75,0.85,0.9\}$.}\label{tab6}
\renewcommand{\arraystretch}{1.2}
\scalebox{0.68}{\begin{tabular}{|c||c|c|c|c||c|c|c|c||c|c|c|c|}
\hline
\backslashbox{$\mathbf{n=20}$}{$\mathbf \tht$} &\multicolumn{4}{|c||}{$\mathbf{0.5}$}&\multicolumn{4}{|c||}{$\mathbf{1}$}&\multicolumn{4}{|c|}{$\mathbf{2}$}\\
\hline
\bf CMLE&  \multicolumn{4}{|c||}{0.83}&\multicolumn{4}{|c||}{0.503}&\multicolumn{4}{|c|}{0.266}\\
\hline
$\mathbf \rho_0$ & \bf 0.5 & \bf 0.75 & \bf 0.85 & \bf 0.9  & \bf 0.5 & \bf 0.75 & \bf 0.85 & \bf 0.9  & \bf 0.5 & \bf 0.75 & \bf 0.85 & \bf 0.9 \\
\hline
\bf `Bayes' (\%) &  -8.41 &-14.67& -14.90& -13.55&   -7.25& -12.05 &-12.66& -11.48&  -5.82 &-6.56& -5.42& -3.21
\\ \hline\hline
\backslashbox{$\mathbf{n=100}$}{$\mathbf \tht$} &\multicolumn{4}{|c||}{$\mathbf{0.5}$}&\multicolumn{4}{|c||}{$\mathbf{1}$}&\multicolumn{4}{|c|}{$\mathbf{2}$}\\
\hline
\bf CMLE&  \multicolumn{4}{|c||}{0.679}&\multicolumn{4}{|c||}{0.444}&\multicolumn{4}{|c|}{0.242}\\
\hline
$\mathbf \rho_0$ & \bf 0.5 & \bf 0.75 & \bf 0.85 & \bf 0.9  & \bf 0.5 & \bf 0.75 & \bf 0.85 & \bf 0.9  & \bf 0.5 & \bf 0.75 & \bf 0.85 & \bf 0.9 \\
\hline
\bf `Bayes' (\%) &  3.76 &-0.25 &-1.04& -1.15& 0.89 &-1.13 &-1.22& -0.95& -0.44 &-0.77 &-0.46& -0.03
\\ \hline\hline
\end{tabular}}\end{table}

In Table~\ref{tab4}, we compare the obtained errors with varying values of $\de$, while in Table~\ref{tab5} the influence of $\tht$ is measured. First it appears that, obtained errors depend only on $S=n\de$, and not on the individual values of $n$ and $\de$ (see the bold type errors). It is not a surprise since examination of $L^2$-risks shows that leading terms are of order $n\de$ for each estimators. Moreover, the errors are much larger as $\de$ and|or $\tht$ are small. Again, it agrees with our theoretical framework since more $\de$ is small, more important is the correlation,  implying a degradation of the overall risk. Also, low values of $\tht$ corresponds to variables with high variance ($\var(X_1) = (2\tht)^{-1}$), and prediction is more difficult in this case. Finally, errors are represented in term of $n$ in Figure~7.2 (left): not surprisingly, errors decrease and estimators are asymptotically equivalent.

Concerning prediction when $\tht$ is unknown ($m$ known), we have computed the two predictors derived from the estimators given by \eqref{e77} (CMLE) and \eqref{e78} (`Bayes').  Figure~7.2 (right) that errors decrease with $n$ and Bayesian predictors are much better for small values of $n$. A noteworthy result is that  errors are significantly improved for any choice of prior, at least for $n$ small: see Table~\ref{tab6} for $n=20$ and Figure~7.3 (left) for $n=30$. This last conclusion may be tempered by the possibly bad behaviour of the CMLE in this framework. Finally for $n=100$, the Bayesian predictor is more sensitive to the prior (Figure~7.3, right).



\bibliographystyle{spbasic}
\bibliography{bb2013}
\end{document}